\documentclass[11pt, oneside]{article}
\usepackage[utf8]{inputenc}
\usepackage{amsmath, amsthm, amssymb}
\usepackage[dvipsnames]{xcolor}
\usepackage{enumerate, comment, multirow, parskip}
\usepackage{dsfont}
\usepackage{enumitem}
\usepackage{mathtools}
\usepackage{indentfirst}
\usepackage{amsthm}

\usepackage{comment}
\usepackage{makecell}

\usepackage{subcaption}
\usepackage{todonotes}
\usepackage{multicol}
\usepackage{tikz}
\usetikzlibrary{decorations.markings}
\usetikzlibrary{decorations.pathmorphing}
\usetikzlibrary{calc,backgrounds}
\usetikzlibrary{patterns}
\usepackage{standalone}
\usepackage{mathtools,mdframed}
\usepackage{adjustbox}
\usepackage[left=1in,right=1in,top=1in,bottom=1in,bindingoffset=0cm]{geometry}

\usepackage[normalem]{ulem}

\usepackage{hyperref}
\hypersetup{colorlinks=true,linkcolor=blue,anchorcolor=blue,citecolor=green,filecolor=blue,urlcolor=blue,bookmarksnumbered=true,pdfview=FitB}

\usepackage[capitalise]{cleveref}

\usepackage{tabularx}
\usepackage{booktabs}

\definecolor{green}{RGB}{50,205,50}
\definecolor{yellow}{RGB}{240,240,10}

\newtheorem{theorem}{Theorem}[section]
\newtheorem{lemma}[theorem]{Lemma}
\newtheorem{corollary}[theorem]{Corollary}

\newtheorem{conjecture}[theorem]{Conjecture}

\newtheorem{question}[theorem]{Question}

\newtheorem{proposition}[theorem]{Proposition}
\newtheorem*{theorem*}{Theorem}
\newtheorem*{corollary*}{Corollary}

\theoremstyle{definition}

\newtheorem{remark}{Remark}

\theoremstyle{definition}

\title{Frozen colorings and recolorability of blowups}

\author{
Tianyu Huang\footnote{Center for Combinatorics and LPMC, Nankai University, Tianjin 300071, P.R.China. Email: tyhuang@mail.nankai.edu.cn.}
\and 
Yulai Ma\footnote{Center for Combinatorics and LPMC, Nankai University, Tianjin 300071, P.R.China. Email:  ylma@nankai.edu.cn.}
\and
Jiaqi Zhang\footnote{Center for Combinatorics and LPMC, Nankai University, Tianjin 300071, P.R.China. Email: jiaqizhang@mail.nankai.edu.cn.}}

\begin{document}
\date{}
\maketitle

\begin{abstract}
The \emph{$k$-reconfiguration graph} $R_k(G)$ of the proper $k$-colorings of a graph $G$
is the graph whose vertices are the $k$-colorings of $G$, with an edge between
two colorings if they differ on exactly one vertex. A graph $G$ is
\emph{recolorable} if $R_k(G)$ is connected for all $k \ge \chi(G) + 1$.

In this paper, we study recolorability under blowups through frozen colorings, here blowup means replacing vertices with non-empty cliques. We characterize frozen colorings of a blowup of $G$ in terms of maximal independent sets of the base graph, and develop a lifting principle that transfers suitable frozen obstructions from an induced subgraph to a blowup of the ambient graph. This framework yields several consequences. In particular, we answer negatively a question of Belavadi, Cameron and Sintiari on recoloring via modular decomposition, and disprove their conjecture on blowups of prime graphs.

We further determine exactly which cycles have all blowups recolorable: this holds for $C_n$ if and only if $n\in\{3,4,5,7\}$.
The frozen obstructions arising from the negative cases, together with the lifting principle, imply that if every blowup of a graph $G$ is recolorable, then every induced hole of $G$ has length $4$, $5$, or $7$. Finally, using linear programming duality, we show that every such graph $G$ is fractionally co-strongly perfect.

\noindent\textbf{Keywords:} Recoloring, Reconfiguration graph, Frozen coloring, Blowup, Modular decomposition, Prime graph, Fractionally co-strongly perfect
\end{abstract}

\section{Introduction}
Graph recoloring is a fundamental reconfiguration problem in which one studies
how proper colorings of a graph can be transformed into one another by changing
the color of a single vertex at a time. For an integer $k\ge 1$, the
\emph{$k$-reconfiguration graph} $R_k(G)$ has the proper $k$-colorings of
$G$ as its vertices, with two colorings adjacent when they differ on exactly
one vertex. A graph $G$ is \emph{$k$-mixing} if $R_k(G)$ is connected, and
is \emph{recolorable} if it is $k$-mixing for every
$k\ge \chi(G)+1$.

A classical starting point in graph recoloring is degeneracy. 
A graph $G$ is \emph{$d$-degenerate} if every nonempty subgraph of $G$
contains a vertex of degree at most $d$.
Dyer et al.~\cite{dyer2006randomly} and Cereceda et al.~\cite{cereceda2009mixing}
independently proved that every $d$-degenerate graph is $k$-mixing for
$k\ge d+2$. Cereceda~\cite{cereceda2007mixing} further conjectured that, in this
range, the diameter of the recoloring graph is $O(n^2)$.
This conjecture remains open in full generality, but substantial progress has
been made. In particular, Bousquet and Heinrich~\cite{bousquet2022polynomial} proved
that the diameter is $O(n^{d+1})$ for $k\ge d+2$, and is quadratic when
$k\ge \frac{3}{2}(d+1)$, while Bousquet and
Perarnau~\cite{bousquet2016fast} obtained a linear bound when $k\ge 2d+2$.
Related bounds for sparse graphs in terms of maximum average degree were
obtained by Feghali~\cite{feghali2021reconfiguring}.

Recoloring has also been studied extensively for structurally restricted graph
classes. Results are known, for example, for chordal and chordal bipartite
graphs~\cite{bonamy2014reconfiguration,merkel2022recolouring}, graphs of bounded treewidth via tree
decompositions~\cite{bonamy2013recoloring,bonamy2018recoloring, wang2026linear}, planar graphs~\cite{bartier2023recoloring, dvovrak2021thomassen} and related sparse
classes~\cite{Bonamy_Bousquet_Perarnau_2021,bousquet2016fast,cranston2022list}, and several hereditary classes defined by
forbidden induced subgraphs~\cite{belavadi2024reconfiguration,lei2024reconfiguration}.
These developments motivate the use of graph decompositions and hereditary
structure in recoloring problems.

Belavadi, Cameron and Sintiari~\cite{belavadi2026recoloring} recently studied recoloring through
modular decomposition. A nonempty set $S\subseteq V(G)$ is a \emph{module} if every vertex in $V(G)\setminus S$ is either adjacent to every vertex of $S$ or no vertex of $S$. A module $S\subset V(G)$ is said to be \emph{nontrivial} if $S$ contains at least two vertices. A graph is \emph{prime} if it has no
nontrivial module. Note that a \emph{blowup} of a graph $G$ is obtained
by replacing each vertex of $G$ by a nonempty clique, with two replacement
cliques made complete or anticomplete according to whether the corresponding
vertices of $G$ are adjacent or nonadjacent. They proved that if every
blowup of every prime graph in a hereditary class $\mathcal G$ is recolorable,
then every graph in $\mathcal G$ is recolorable. They then asked whether the
blowup assumption can be omitted.  

\begin{question}[\cite{belavadi2026recoloring}]\label{Question-BCS}
If every prime graph in a hereditary class is recolorable, must every graph in the class be recolorable?
\end{question}

For prime graphs, they also proposed the following conjecture.

\begin{conjecture}[\cite{belavadi2026recoloring}]\label{Conjecture-BCS}
Let $G$ be a prime graph. Every blowup of $G$ is recolorable if and only if every induced subgraph of $G$ is recolorable.
\end{conjecture}

In this paper, we answer Question \ref{Question-BCS} negatively and disprove Conjecture \ref{Conjecture-BCS}. 

\begin{theorem}\label{coro1.4}
\begin{enumerate}
\item[\textup{(i)}]There are infinitely many hereditary classes in which every prime
graph is recolorable but some graph in the class is not recolorable.
\item[\textup{(ii)}]There are infinitely many prime graphs $G$ such that every induced subgraph of
$G$ is recolorable but some blowup of $G$ is not recolorable.
\end{enumerate}
\end{theorem}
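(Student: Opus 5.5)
We will derive both parts from one family of examples, namely blowups of cycles. The tools are the characterization of frozen colorings of a blowup via maximal independent sets of the base graph, and the lifting principle described in the abstract. A frozen $k$-coloring is an isolated vertex of $R_k$. So whenever a blowup $H$ of $G$ has a frozen $k$-coloring with $k\ge \chi(H)+1$ and $R_k(H)$ has more than one vertex, $H$ is not recolorable.

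Cycles give the base graphs. For $n\ge 5$ the cycle $C_n$ is prime: a nontrivial module of a cycle must have all its vertices share the same neighbours outside the module, and this fails for $n\ge 5$. Every induced subgraph of $C_n$ is either $C_n$ itself or a disjoint union of paths. Both kinds are recolorable, since paths are $1$-degenerate and $C_n$ is $2$-degenerate, so they are $k$-mixing for $k\ge 3$ and $k\ge 4$ respectively. The case $C_n$, $n$ odd, $k=3$ is also classical and mixing for $n\ge 4$, while the even case needs $k\ge 3$ only, which is covered by degeneracy $\le$ we need care: for even $n$, $\chi+1=3$ and $C_n$ is not $3$-mixing in general. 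So I would use odd $n$, where $\chi+1=4$ and $2$-degeneracy suffices.

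The key step is the construction. For each odd $n\ge 9$ (the paper will show exactly $n\notin\{3,4,5,7\}$ fails), we exhibit a blowup $H_n$ of $C_n$ and a frozen $k$-coloring with $k\ge\chi(H_n)+1$. Via the characterization, a frozen coloring of a blowup corresponds to assigning each color class a maximal independent set of $C_n$, where vertex $v$ is covered exactly $|B_v|$ times and each vertex lies in "enough" classes to block recoloring. Concretely, I would pick a family of maximal independent sets of $C_n$, use them with multiplicities as the color classes, and set $|B_v|$ equal to the number of sets containing $v$. Such a coloring is frozen because each vertex of $B_v$ sees every other color in $N[v]$, which is exactly maximality of each chosen independent set. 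The number of colors is the number of chosen sets, counted with multiplicity. This must exceed $\chi(H_n)$, which is the weighted chromatic number of $C_n$ and equals $\max(\max_{uv}(w_u+w_v), \lceil W/\lfloor n/2\rfloor\rceil)$. We exploit the fact that maximal independent sets of size $\lceil n/3\rceil$ are much smaller than $\lfloor n/2\rfloor$, so a weighting covered uniformly by small maximal independent sets uses more colors than its fractional/weighted chromatic number. Balancing these counts is the main obstacle. I would first try $n=9$ using the three sets $\{i,i+3,i+6\}$, each taken with multiplicity $2$: all weights are $2$, $k=6$, but $\chi=\max(4,\lceil 18/4\rceil)=5$. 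So $k\ge\chi+1$ gives a non-recolorable blowup of the prime graph $C_9$. For larger odd $n$ I would generalize this, or apply the lifting principle from an induced $C_9$-like obstruction.

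This gives (ii) with infinitely many $n$. For (i), take the hereditary class $\mathcal G_n$ consisting of all induced subgraphs of $H_n$. A prime graph in $\mathcal G_n$ is an induced subgraph of $H_n$ using at most one vertex per clique $B_v$, since two vertices of the same $B_v$ form a module. Such a graph is therefore an induced subgraph of $C_n$, hence recolorable by the argument above. Yet $H_n\in\mathcal G_n$ is not recolorable. The classes $\mathcal G_n$ are distinct for different $n$, because $\mathcal G_n$ contains $C_n$ as its longest induced cycle, so there are infinitely many of them.
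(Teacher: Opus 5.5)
\textbf{Gap: ``infinitely many'' is never established.}
Your $n=9$ example is correct. The residue classes $\{i,i+3,i+6\}$ are maximal independent sets of $C_9$, so taking each twice gives a frozen $6$-coloring of the uniform weight-$2$ blowup. That blowup is $5$-colorable, for instance with supports $\{0,2,4,6\},\{1,3,5,7\},\{2,4,6,8\},\{1,3,5,8\},\{0,7\}$. Hence $R_6$ has an isolated vertex and is disconnected.

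Both parts of the statement, however, require infinitely many examples. For (ii) you produce only one prime graph, and your distinctness argument for (i) also rests on having infinitely many $n$. Neither extension you suggest is carried out:
\begin{itemize}
\item ``Generalize this'' is left unspecified.
\item ``Apply the lifting principle from an induced $C_9$-like obstruction'' cannot work. For $n\neq 9$ the cycle $C_n$ contains no induced $C_9$; all its proper induced subgraphs are linear forests. So the obstruction has to be built on $C_n$ itself.
\end{itemize}
Your claim for \emph{every} odd $n\ge 9$ would need the cyclic-shift construction of Proposition~\ref{prop:evenhole}, or Theorem~\ref{thm:3457}, which is proved later.

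\textbf{The fix.} Restrict to $n=3m$ with $m$ odd, and your construction goes through unchanged. The classes $A_0,A_1,A_2$, each taken twice, give a frozen $6$-coloring of $C_{3m}(2)$. For an explicit $5$-coloring, give five consecutive cliques the pairs $\{1,2\},\{3,4\},\{5,1\},\{2,3\},\{4,5\}$. Give the remaining (even number of) cliques $\{1,2\}$ and $\{4,5\}$ alternately. This also avoids relying on the weighted-chromatic-number formula for odd cycles, which you assert without proof.

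\textbf{Comparison with the paper.} The paper's version is leaner. It doubles only the vertices $v_{3j}$ and uses $A_0,A_0,A_1,A_2$, giving a frozen $4$-coloring against $\chi=3$. The $3$-coloring is written down directly from a proper $3$-coloring of $C_m$.

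\textbf{Two smaller slips.}
\begin{itemize}
\item Your sentence saying odd cycles are $3$-mixing is false: $123123123$ is a frozen $3$-coloring of $C_9$. This is harmless, since recolorability of an odd cycle only needs $k\ge 4$.
\item In (i), a prime graph can meet a single clique in two vertices, namely $K_2$. There the twin pair $\{x,y\}$ is the whole vertex set, so it is not a nontrivial module. This case needs a separate (trivial) remark, as the paper gives.
\end{itemize}
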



The counterexamples in Theorem~\ref{coro1.4} are obtained from a characterization of frozen colorings of blowups in terms of maximal independent sets. We next investigate structural restrictions on graphs whose every blowup is recolorable. A frozen coloring becomes an obstruction to recolorability when the same blowup admits a coloring with fewer colors. Our main technical tool shows that such an obstruction can be lifted from an induced subgraph to the whole graph.

\begin{theorem}\label{thm:lifting}
Let $H$ be a nonempty induced subgraph of a graph $G$. Suppose that some 
blowup of $H$ admits a frozen $r$-coloring and also a proper coloring with at
most $r-1$ colors. Then there exist a  blowup $G(w)$ of $G$ and an
integer $\ell$ such that
$\chi(G(w))\le \ell-1$ and
 $R_\ell(G(w))$ is disconnected.
In particular, $G$ has a non-recolorable  blowup.
\end{theorem}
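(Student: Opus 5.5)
The plan is to use the correspondence between frozen colorings of blowups and covers by maximal independent sets, then extend the obstruction from $H$ to $G$ in a way that preserves both the frozen coloring and a coloring with fewer colors.

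\textbf{The correspondence.} In a blowup $G(w)$, a color class meets each replacement clique at most once. So it projects to an independent set of $G$, and an $\ell$-coloring of $G(w)$ is the same as a list $I_1,\dots,I_\ell$ of independent sets of $G$ with $w(v)=|\{j: v\in I_j\}|$ for every $v$. Such a coloring is frozen exactly when every $I_j$ is a \emph{maximal} independent set of $G$. Indeed, a vertex of clique $v$ must see every color $i$ other than its own, either inside the clique ($v\in I_i$) or in a neighbouring clique ($v$ has a neighbour in $I_i$). I would first record this characterization, which the paper develops. Moreover, a frozen $\ell$-coloring is an isolated vertex of $R_\ell(G(w))$, and $R_\ell(G(w))$ has other vertices, for instance by permuting colors when $\ell\ge 2$. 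Hence it suffices to build $w$ and $\ell$ such that $G(w)$ has a frozen $\ell$-coloring and $\chi(G(w))\le \ell-1$.

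\textbf{The construction.} Let $H(w_0)$ have frozen $r$-coloring with classes $J_1,\dots,J_r$, which are maximal independent sets of $H$, and an $(r-1)$-coloring with classes $A_1,\dots,A_{r-1}$. I would add the vertices of $G\setminus H$ one at a time, so it suffices to treat $G=H+u$. The induction hypothesis then has to be exactly the hypothesis of the theorem, namely a frozen coloring plus a proper coloring with fewer colors on a blowup of the current graph.
\begin{itemize}
\item If $u$ has a neighbour in every $J_i$, the $J_i$ remain maximal in $G$. We add one extra class $K$, a maximal independent set of $G$ containing $u$, and set $\ell=r+1$ and $w=w_0+\mathbf 1_K$. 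The fewer-colors coloring is $A_1,\dots,A_{r-1},K$, which uses $r=\ell-1$ colors.
\item Otherwise, let $S=\{i: u \text{ has no neighbour in } J_i\}$. For $i\in S$ the set $I_i=J_i\cup\{u\}$ is maximal in $G$, and $w(u)=|S|$. First scale: replacing $w_0$ by $s w_0$ repeats each $J_i$ and each $A_k$ $s$ times, so the frozen coloring uses $rs$ colors and the other coloring uses $s(r-1)$ colors, saving $s$ colors. The clique at $u$ has size $s|S|$. It must avoid only the colors of the $A$-classes that contain neighbours of $u$, and otherwise needs fresh colors.
\end{itemize}

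\textbf{The main obstacle.} The hard step is the bound $\chi(G(w))\le \ell-1$ when $u$ is attached. The clique at $u$ may be forced into $s|S|$ fresh colors, which would wipe out the saving of $s$. My first attempt is to exploit freedom in the choice of the $(r-1)$-coloring. Since $u$ has no neighbour in $\bigcup_{i\in S}J_i$, I would try to recolor so that enough $A$-classes avoid $N(u)$ and $u$ can reuse their colors. If this fails, I would add a further maximal independent set $K\ni u$ with large multiplicity $t$, and compare the two sides. The frozen side has $\ell=rs+t+\dots$ colors. The coloring side uses $s(r-1)$ colors for $H$ plus $t$ colors shared between $K\cap H$ and $u$. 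The aim is to choose the multiplicities so that the saving of $s$ colors on $H$ survives. Checking that this bookkeeping closes is the key step I would verify.
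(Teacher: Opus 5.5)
There is a genuine gap, at exactly the step you leave unverified. Your reduction aims for a blowup $G(w)$ that has a frozen $\ell$-coloring of the \emph{whole} graph together with $\chi(G(w))\le \ell-1$. Your induction tries to carry precisely this property from $H$ to $H+u$. That target is unattainable in general, so no bookkeeping can close Case 2.

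Take $G=H\cup K_1$ with $u$ isolated and $H=C_6$. This $H$ satisfies the hypothesis: weight $2$ everywhere gives a frozen $6$-coloring and a proper $4$-coloring. Every maximal independent set of $G$ contains $u$. So in any frozen $\ell$-coloring of any blowup $G(w)$, all $\ell$ colors occur in the clique $Q_u$, whence $\chi(G(w))\ge w(u)\ge \ell$. In your notation this is $S=[r]$: after scaling by $s$ and adding $t$ copies of $K\ni u$, the clique at $u$ has size $sr+t=\ell$. Rechoosing the $(r-1)$-coloring cannot help.

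Conceptually, by Proposition~\ref{proposition1.7}, such a pair of colorings exists on some blowup of $G$ if and only if $G$ has no saturating vertex weighting. That property does not pass from $H$ to $G$: $C_6$ has no saturating weighting, while $C_6\cup K_1$ has one ($y(u)=1$, $y=0$ elsewhere). So the hypothesis of the theorem cannot be propagated vertex by vertex.

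The missing idea is that you only need to freeze an induced \emph{core}, not the whole blowup. Let $I_1,\dots,I_r$ be the frozen supports on $V(H)$. The paper splits the outside vertices into two groups:
\begin{itemize}
\item $Z$, the vertices dominated by every $I_i$ (your Case 1), handled all at once by maximal independent sets $J_j\ni z_j$ of $G[V(H)\cup Z]$;
\item $Y$, the rest, with $|Y|=q$.
\end{itemize}
It scales the seed by $t=q+1$. Each $y_h\in Y$ gets weight $1$ and the color $(i_h,h)$, where $I_{i_h}$ misses $N(y_h)$. Distinct $h$ keep these colors pairwise distinct, and they clash with nothing in the core.

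The resulting $\ell$-coloring, with $\ell=tr+|Z|$, is not frozen on $Y$. It is, however, frozen on the induced subgraph $F$ over $V(H)\cup Z$. By Lemma~\ref{lem:locking}, $F$ never changes, so every reachable coloring uses all $\ell$ colors. Meanwhile a coloring with $tp+|Z|+q\le \ell-1$ colors exists, because the $q$ fresh colors for $Y$ are paid for by the saving of $t\ge q+1$. Hence $R_\ell(G(w))$ is disconnected even when, as in the example above, no globally frozen witness exists.
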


The proof enlarges the original frozen configuration to a permanently locked
induced core in a suitable blowup of $G$. Thus an obstruction found inside an
induced subgraph survives in an appropriate blowup of the ambient graph. This lifting principle will be used below to derive structural restrictions on graphs whose every blowup is recolorable.


We next turn to structural consequences of Theorem \ref{thm:lifting}. Applying it to frozen obstructions supported on induced cycles gives the following restriction.

\begin{theorem}\label{Induced-hole restriction}
If every blowup of a graph $G$ is recolorable, then every induced hole
of $G$ has length $4$, $5$, or $7$.
\end{theorem}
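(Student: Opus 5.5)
The plan is to prove the contrapositive. Suppose $G$ contains an induced hole $C_n$ with $n\ge 6$ and $n\ne 7$. By Theorem~\ref{thm:lifting}, applied with $H=C_n$, it suffices to find one blowup of $C_n$ that admits a frozen $r$-coloring and also a proper coloring with at most $r-1$ colors. Theorem~\ref{thm:lifting} then gives a non-recolorable blowup of $G$, contradicting the hypothesis. So the whole task reduces to exhibiting such frozen obstructions on blowups of $C_n$ for every $n\in\{6\}\cup\{8,9,10,\dots\}$. This is the ``negative'' half of the cycle characterization announced in the introduction.

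To build the obstructions, I would use the characterization of frozen colorings of blowups via maximal independent sets. A coloring of a blowup $C_n(w)$ is frozen exactly when each color class projects onto a maximal independent set of $C_n$; this is what stops every vertex from moving. Indeed, a vertex in the clique of $v$ can change to color $c$ only if no vertex with color $c$ lies in $N[v]$. Since each clique is rainbow, this amounts to asking that the color classes, viewed as independent sets $I_1,\dots,I_r$ of $C_n$, are all maximal. The multiplicity of $v$ is then $w(v)=|\{j: v\in I_j\}|$, which is nonempty as soon as the $I_j$ cover $V(C_n)$.

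The weighted blowup has $\chi(C_n(w))=\max\bigl(\max_{uv\in E} (w(u)+w(v)),\ \lceil \sum w/\lfloor n/2\rfloor\rceil\bigr)$ when $n$ is even, and it is governed by the fractional chromatic number in general. So I want a family of $r$ maximal independent sets for which this chromatic number is at most $r-1$.

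The concrete choice depends on $n$.
\begin{itemize}
\item For even $n\ge 6$, I would take the two perfect independent sets (evens and odds) together with some non-perfect maximal sets that contain gaps of length $2$. For example, for $C_6$, use $\{1,3,5\},\{2,4,6\},\{1,4\},\{2,5\},\{3,6\}$. This gives $r=5$ with $w\equiv 2$, and $\chi(C_6(2))=4$.
\item For $n\ge 8$, use all $n$ rotations of one maximal independent set with spacing pattern mixing gaps $2$ and $3$. Then $w$ is constant, equal to $|I|$, and $\chi=\lceil 2|I|\rceil$ or $\lceil n|I|/\lfloor n/2\rfloor\rceil$, which should be less than $n=r$ whenever $|I|$ is small relative to $n/2$. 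For instance, $C_8$ with $I=\{1,4,6\}$ (gaps $3,2,3$) gives $w\equiv 3$, $r=8$, and $\chi(C_8(3))=6$.
\item For odd $n\ge 9$, similarly take rotations of a maximal set of size $\lceil n/3\rceil$. This gives $w\equiv\lceil n/3\rceil$ and $\chi\le \lceil n\lceil n/3\rceil/\lfloor n/2\rfloor\rceil<n$.
\end{itemize}
For $n=7$ the only option is $w\equiv 3$ with $r=7$ and $\chi=\lceil 21/3\rceil=7$, which is consistent with $7$ being exceptional.

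The main obstacle is verifying the strict inequality $\chi(C_n(w))\le r-1$ uniformly in $n$, especially for odd $n$, where the ceiling in $\lceil nw/\lfloor n/2\rfloor\rceil$ is tight. I would first check the odd cases with $n\ge 9$ directly: with $w=\lceil n/3\rceil$, the value $nw/((n-1)/2)$ is about $2n/3+O(1)$, which is less than $n$ once $n\ge 9$. I would handle $n=9$ explicitly ($w=3$, $\chi=\lceil 27/4\rceil=7<9$). Finally, I would invoke the known formula for the chromatic number of blowups of odd cycles, $\max(\max_{uv\in E}(w(u)+w(v)),\lceil \sum w/\lfloor n/2\rfloor\rceil)$, to justify these values.
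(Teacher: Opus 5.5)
Your proposal is correct and uses the same skeleton as the paper. You prove the contrapositive through Theorem~\ref{thm:lifting}. The frozen seed on $C_N$ is the family of all $N$ cyclic shifts, kept with multiplicity, of a maximal independent set of size $\rho=\lceil N/3\rceil$ built from gaps $2$ and $3$.

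The difference is in how you certify the low coloring. You keep the uniform weight $\rho$ with $r=N$, so you must bound $\chi(C_N(\rho))$.
\begin{itemize}
\item For odd $N$ you invoke the weighted chromatic number formula for odd cycles. It is true but external; for uniform weights it is Stahl's $\chi(C_{2k+1}(m))=2m+\lceil m/k\rceil$.
\item You handle $C_6$ with a separate family.
\end{itemize}
The paper instead scales to weight $\rho\sigma$ with $\sigma=\lfloor N/2\rfloor$. Repeating each $\rho$-shift $\sigma$ times gives a frozen $N\sigma$-coloring. Repeating each shift of a maximum independent set $\rho$ times gives an explicit proper $N\rho$-coloring. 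Then $\rho<\sigma$ settles $N=6$ and all $N\ge 8$ uniformly, with no chromatic-number theory.

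Your route yields smaller concrete obstructions, e.g.\ $C_9(3)$ with a frozen $9$-coloring and $\chi=7$. To make it self-contained, take a homomorphism $C_{2k+1}\to C_{2\rho+1}$ for $\rho\le k$ and lift it cliquewise to the blowups. Combine it with the coloring of $C_{2\rho+1}(\rho)$ that gives clique $j$ the colors $j\rho,\dots,j\rho+\rho-1 \pmod{2\rho+1}$. This shows $\chi(C_{2k+1}(\rho))\le 2\rho+1$. Since $\lceil N/3\rceil\le (N-3)/2$ for all odd $N\ge 9$, you get $2\rho+1\le N-2$.

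Minor points:
\begin{itemize}
\item The estimate you need is $\lceil 2N\rho/(N-1)\rceil\le N-1$, i.e.\ $2N\rho/(N-1)\le N-1$. Being ``less than $N$'' before rounding is not enough.
\item The multiset convention matters when $3\mid N$. Deduplicated shifts would give $w\equiv 1$ and no obstruction.
\item Your aside that $w\equiv 3$ is the only option for $C_7$ is not literally true, though it is irrelevant here. The real reason $C_7$ is exceptional is that every maximal independent set of $C_7$ has size $3=\alpha(C_7)$. Hence any frozen $r$-coloring of a blowup has $\sum w=3r$, which forces $\chi\ge r$.
\end{itemize}
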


It is natural to identify the correctness of the converse to establish a stronger bidirectional equivalence relation, while there exists a simple counterexample for $C_7$. At the level of cycles, this restriction is sharp. In fact, we obtain the
following exact classification.

\begin{theorem}\label{thm:3457}
Every blowup of $C_n$ is recolorable if and only if $n\in\{3,4,5,7\}$.
\end{theorem}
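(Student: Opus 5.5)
The statement has two directions, and I would handle them separately.

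\textbf{Negative direction ($n=6$ and $n\ge 8$).} Here I would exhibit, for each such $n$, an explicit blowup $C_n(w)$ that has a frozen $r$-coloring and also a proper coloring with at most $r-1$ colors. Then $R_r(C_n(w))$ has an isolated vertex and at least one other vertex, and $r\ge \chi+1$, so the blowup is not recolorable.

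To build the frozen colorings I would use the characterization of frozen colorings of blowups via maximal independent sets. A coloring of $G(w)$ is frozen exactly when every color class is a maximal independent set of $G(w)$. Such a class is the blowup of a maximal independent set of $G$, with one vertex taken in each chosen bag. So I need a family of maximal independent sets $I_1,\dots,I_r$ of $C_n$ with the number of sets containing vertex $v$ equal to $w(v)\ge 1$. The goal is $r > \chi(C_n(w))$.

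Since $\chi(C_n(w))$ equals $\max\bigl(\max_i (w_i+w_{i+1}),\ \lceil \sum w/\lfloor n/2\rfloor\rceil\bigr)$ or close to it (the fractional chromatic number rounded up suffices for cycles), the condition becomes: cover $C_n$ by $r$ maximal independent sets so that every edge is covered at most $r-1$ times. Equivalently, no edge $v_iv_{i+1}$ has every one of the $r$ sets meeting it.

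For $n=6$ I would take the six sets $\{1,4\},\{2,5\},\{3,6\},\{1,3,5\},\{2,4,6\}$, each with suitable multiplicity, and check the edge loads directly. For $n\ge 8$ I would use small maximal independent sets with gaps of size 2. Their edge coverage is strictly below $r$, so $\chi\le r-1$; a small computation confirms this, for example by rotating one set of size about $n/3$.

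\textbf{Positive direction ($n\in\{3,4,5\}$, and then $n=7$).} For $n=3,4$, blowups of $C_3$ are cliques and blowups of $C_4$ are complete bipartite joins of cliques (co-bipartite). For these I would argue directly: in a clique no frozen obstruction is possible, and for $C_4$ I would apply the Belavadi--Cameron--Sintiari modular result or a Kempe-type argument. For $C_5$ and $C_7$ the blowups are circular-arc/co-graph-like structures.

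My general strategy for $k\ge\chi+1$ is to show that every $k$-coloring can be recolored to a canonical one. Each color class of a blowup of $C_n$ has at most $\lfloor n/2\rfloor$ vertices, one per bag from pairwise nonadjacent bags. Using the spare color, I would first make the coloring non-frozen, and then merge color classes greedily toward a fixed optimal coloring.

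\textbf{Main obstacle.} The hardest step is $C_7$. Its maximal independent sets have sizes 3 and 2 (e.g. $\{1,4\}$ is not maximal, but $\{1,3,5\}$ and $\{1,4,6\}$ are), and I must rule out any frozen configuration with $r\ge\chi+1$ colors. I would first try a counting/LP argument showing that any cover of $C_7$ by maximal independent sets with multiplicities $w$ forces some edge to be covered by all $r$ sets, or else forces $r\le\chi$. Beyond freezing, I would still need genuine connectivity. For that I would induct on $\sum w$: I would recolor so that some vertex shares a color pattern with a twin in its bag-neighborhood, then contract.
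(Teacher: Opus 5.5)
Your negative direction follows the paper's idea: explicit frozen obstructions built from maximal independent sets, on a blowup of $C_n$ itself. The positive direction, however, has a genuine gap, because for $C_5$ and $C_7$ you give no connectivity argument.

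Ruling out frozen $r$-colorings with $r\ge\chi+1$ is only a necessary condition, since $R_k$ can be disconnected without isolated vertices. Your twin-contraction induction is too vague to check. Vertices in one bag are adjacent, so they never share a color. Any induction that deletes a vertex $x$ must also explain how to lift a recoloring sequence of $G-x$ when $x$ blocks it, and that is exactly the hard part.

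The missing idea is the paper's reduction through an unused or singleton color:
\begin{itemize}
\item Work in the hereditary class of blowups of $C_7$ with possibly empty bags, and fix an optimal $p$-coloring $\beta$.
\item If some color $c$ is unused or sits on a single vertex $v$, recolor the $\beta$-class $S$ containing $v$ (or any class) to $c$. Then apply induction to $G-S$ with $k-1\ge p\ge\chi(G-S)+1$ colors, and repair the labels using a spare color.
\item Otherwise every class has size $2$ or $3$, and $|V(G)|=3k-q\le 3p$ gives $q\ge 3(k-p)\ge 3$ classes of size two. Since $k>\omega$, each clique $Q_i\cup Q_j$ over an edge misses some color. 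Because exactly one independent triple of $C_7$ avoids a given edge, a short case analysis then produces a singleton class in at most three moves.
\end{itemize}
For $C_4$ and $C_5$, the paper simply cites the Belavadi--Cameron--Sintiari theorem that every blowup of a $3K_1$-free graph is recolorable. Your remark about circular-arc structure does not replace this.

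There are also errors in the negative direction. Your ``equivalently'' (no edge is met by all $r$ sets) is valid only for even $n$, where blowups are perfect. For odd $n$ the term $\lceil \sum w/\lfloor n/2\rfloor\rceil$ cannot be dropped. As stated, your criterion would refute the theorem at $n=7$: the seven cyclic shifts of $\{0,2,4\}$ give weight $3$ on every bag and $r=7$, and every edge is met by exactly $6$ sets. Yet $\chi(C_7(3,\dots,3))=7$, since there are $21$ vertices and the independence number is $3$.

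Likewise, $C_7$ has no maximal independent set of size $2$. Its domination number is $3=\alpha(C_7)$, so $C_7$ is well-covered. This is precisely what makes your counting idea work: a frozen $r$-coloring of $C_7(w)$ forces $\sum w=3r$, hence $\chi\ge r$.

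For odd $n\ge 9$ you therefore need an actual coloring with at most $r-1$ colors. The paper takes the $n$ shifts of a maximal independent set of size $\rho=\lceil n/3\rceil$, each repeated $\sigma=\lfloor n/2\rfloor$ times, which gives a frozen $n\sigma$-coloring. It then colors the same blowup with the $n$ shifts of a maximum independent set, each repeated $\rho$ times, using $n\rho<n\sigma$ colors. For $n=6$, the three sets $\{1,4\},\{2,5\},\{3,6\}$, each used once, already give a frozen $3$-coloring of $C_6$ itself.
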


For $n\ge4$, the only-if direction follows from Theorem \ref{Induced-hole restriction}. For the other side, the case $C_3$ is immediate, while the cases $C_4$ and $C_5$ follow from the result of Belavadi, Cameron and Sintiari \cite{belavadi2026recoloring} that every blowup of a $3K_1$-free graph is recolorable. The remaining case $C_7$ requires a different
positive argument. The proof proceeds by induction on the number of vertices: colorings with an unused or singleton color are recolored to a fixed optimal coloring via the induction hypothesis, while all remaining colorings are shown to reach a singleton class in at most three moves. The argument rests on a structural property of $C_7$, the uniqueness of a maximum independent set avoiding an edge, which reduces the analysis to a short finite case check.


The induced-hole restriction is not the only consequence of the frozen-core framework. A second, rather different structural restriction arises from a connection with fractionally co-strongly perfect graphs, introduced by Chudnovsky, Ries and
Zwols~\cite{Chudnovsky_1,Chudnovsky_2}. For a nonempty graph $H$, a
\emph{saturating vertex weighting} is a function
$y:V(H)\to\mathbb R_{\ge 0}$ satisfying
\[
 \sum_{v\in I} y(v)=1
\]
for every maximal independent set $I$ of $H$. A graph is
\emph{fractionally co-strongly perfect} if every nonempty induced subgraph has
a saturating vertex weighting. Using linear programming duality, we
characterize the existence of such a weighting in terms of frozen colorings of
blowups. Combined with Theorem~\ref{thm:lifting}, this gives the
following.

\begin{theorem}\label{fractionally
co-strongly perfect}
If every blowup of a graph $G$ is recolorable, then $G$ is fractionally
co-strongly perfect.
\end{theorem}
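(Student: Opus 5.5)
We prove the contrapositive. Suppose $G$ is not fractionally co-strongly perfect. Then some nonempty induced subgraph $H$ of $G$ has no saturating vertex weighting. By Theorem~\ref{thm:lifting}, it suffices to find a blowup $H(w)$ of $H$ that admits a frozen $r$-coloring and also a proper coloring with at most $r-1$ colors. So the whole task is to turn the infeasibility of the saturating system into such a pair of colorings. We do this by LP duality, combined with the characterization of frozen colorings of blowups via maximal independent sets, which the paper develops before this point.

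First, the frozen colorings of a blowup. In a blowup $H(w)$, a color class is an independent set, so it meets each clique $Q_v$ in at most one vertex. Its projection to $H$ is therefore an independent set $J$ of $H$. The coloring is frozen exactly when every vertex $x$ sees all other colors. For $x\in Q_v$, this means every other class meets $N[v]$ in the blowup sense. If all $|Q_v|$ vertices of a clique use distinct colors, frozenness forces each class to be a maximal independent set of $H$, and the same condition is sufficient. Hence a frozen $r$-coloring of some blowup of $H$ amounts to a family $I_1,\dots,I_r$ of maximal independent sets of $H$, with repetition allowed, that covers every vertex. The blowup is then determined by
\[
w(v)=\#\{j: v\in I_j\}\ge 1 .
\]

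Second, the duality step. Infeasibility of $\{y\ge 0:\ \sum_{v\in I}y(v)=1 \text{ for all maximal } I\}$ gives, by Farkas' lemma, rational multipliers $\lambda_I$ of arbitrary sign with
\[
\sum_{I\ni v}\lambda_I\ge 0 \quad\text{for all } v, \qquad \sum_I\lambda_I<0 .
\]
Write $\lambda=\lambda^+-\lambda^-$ and scale to integers. The family $\mathcal F^-$, which takes each $I$ with multiplicity $\lambda^-_I$, then has pointwise coverage at most that of $\mathcal F^+$, yet strictly more members. We want a frozen coloring with many colors and another coloring of the same blowup with fewer colors. So the frozen coloring should come from the family with more members, $\mathcal F^-$, and the other coloring from $\mathcal F^+$.

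To make this work, add a large multiple $N$ of a fixed family $\mathcal C$ of maximal independent sets covering $V(H)$ to both sides; this keeps every vertex covered. Set $w(v)$ to be the coverage of $v$ by $\mathcal F^-+N\mathcal C$. The frozen coloring of $H(w)$ uses $r=|\mathcal F^-|+N|\mathcal C|$ colors. The family $\mathcal F^+ + N\mathcal C$ covers each $v$ at least $w(v)$ times with only $|\mathcal F^+|+N|\mathcal C|\le r-1$ sets. Any independent set contained in a maximal one is still an independent set, so we can delete surplus occurrences and obtain a proper coloring of $H(w)$ with at most $r-1$ colors.

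The main obstacle is the sign bookkeeping in Farkas' lemma. Equalities give free multipliers, and we must pick the correct infeasibility alternative so that the inequality directions match the "cover at least" requirement and the strict inequality goes the right way. We also need the exact frozen-coloring characterization to allow repeated maximal independent sets.
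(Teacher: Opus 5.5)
Your argument is correct and follows the same route as the paper. You pass to an induced subgraph $H$ with no saturating weighting, use LP duality to produce a blowup of $H$ carrying both a frozen coloring built from maximal independent sets and a proper coloring with fewer colors (this is Proposition~\ref{proposition1.7}), and finish with Theorem~\ref{thm:lifting}. The only difference is in the duality bookkeeping: you apply Farkas' lemma directly to the saturating system and pad both families with $N\mathcal C$ to keep the weights positive, whereas the paper uses strong duality for $\min\{\mathbf{1}^\mathsf{T}x : Ax\ge A\mathbf{1},\ x\ge 0\}$, so its frozen family is every maximal independent set repeated $D$ times and positivity is automatic.
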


Theorem~\ref{fractionally
co-strongly perfect} also gives another perspective on
Theorem~\ref{Induced-hole restriction}. Chudnovsky, Ries and
Zwols~\cite{Chudnovsky_1,Chudnovsky_2} characterized fractionally co-strongly perfect
claw-free graphs by forbidden induced subgraphs; their forbidden family
includes $C_6$ and every $C_k$ with $k\ge8$. Consequently,
Theorem~\ref{fractionally co-strongly perfect} yields an alternative derivation of the
induced-hole restriction. We nevertheless give a direct proof of
Theorem~\ref{Induced-hole restriction}, since it produces explicit frozen
obstructions and illustrates the lifting mechanism.

The remainder of the paper is organized as follows.
Section~2 contains preliminaries and the maximal-independent-set
characterization of frozen colorings in blowups.
Section~3 constructs the counterexamples to Question~\ref{Question-BCS}
and Conjecture~\ref{Conjecture-BCS}.
Section~4 proves Theorem \ref{thm:lifting}.
Section~5 establishes the induced-hole restriction.
Section~6 completes the classification of blowups of cycles by proving the
positive case $C_7$.
Section~7 develops the connection with fractionally co-strongly perfect
graphs, and Section~8 contains concluding remarks.

\section{Preliminaries}\label{section2}
 All graphs considered in this paper are finite and simple. Unless otherwise stated, we follow the standard notation an terminology(see \cite{bondy2008graph}). Two vertices $u$ and $v$ are \textit{adjacent} in $G$ if $uv\in E(G)$. A \textit{$k$-coloring} is a mapping $c: V(G)\rightarrow \{1,2,\dots,k\}$ such that any two adjacent vertices of $G$ receive different colors. If $G$ admits a $k$-coloring, then $G$ is \textit{$k$-colorable}. The minimum $k$ for which a graph is $k$-colorable is called its \textit{chromatic number}, denoted by $\chi(G)$. For a graph $G$ and $X\subseteq V(G)$, the subgraph \textit{induced} by $X$ is denoted by $G[X]$. An induced subgraph $H$ of $G$ is said to be \textit{proper} if $V(H)\subset V(G)$. Particularly, the vertex subset $X$ of $G$ is called an \textit{independent set} if $G[X]$ contains no edges, while it is called a \textit{clique} if $G[X]$ is a complete graph. For a graph $G$, the \textit{open neighborhood} of a vertex $x$, denoted by $N_{G}(x)$, is the set of vertices that are adjacent to $x$ in $G$. The \textit{closed neighborhood} of a vertex $x$, denoted by $N_{G}[x]$, is $N_{G}(x)\cup \{x\}$.
A $k$-coloring $\alpha$ of a graph $G$ is \textit{frozen} if, for each vertex $v$, every color occurs in $N_{G}[v]$. Note that a frozen coloring is an isolated vertex in $R_{k}(G)$.

In what follows, we list some lemmas that will be utilized in the later sections. Let $G(w)$ be a blowup of $G$, where each vertex $v$ is replaced by a clique $Q_v$ of size $w(v)$. For a coloring $\alpha$ of $G(w)$ and a color $i$, define
$$I_i(\alpha)=\{v\in V(G):\mbox{some vertex of $Q_v$ has color $i$}\}.$$

\begin{lemma}\label{frozen}
A proper coloring $\alpha$ of $G(w)$ is frozen if and only if every $I_i(\alpha)$ is a maximal independent set of $G$.
\end{lemma}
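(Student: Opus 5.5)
The plan is to translate the frozen condition in $G(w)$ into a statement about $G$. The key observation is that the closed neighborhood of a vertex $x\in Q_v$ in $G(w)$ is exactly $\bigcup_{u\in N_G[v]} Q_u$. This holds because $Q_v$ is a clique and the cliques $Q_u$ are complete or anticomplete to $Q_v$ according to adjacency in $G$. Consequently, color $i$ occurs in $N_{G(w)}[x]$ if and only if $N_G[v]\cap I_i(\alpha)\neq\emptyset$. In particular, the frozen condition at $x$ depends only on $v$.

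The first step is to show that each $I_i(\alpha)$ is independent in $G$ for every proper coloring $\alpha$. If $u,v\in I_i(\alpha)$ are adjacent in $G$, then some vertex of $Q_u$ and some vertex of $Q_v$ both receive color $i$, and these two vertices are adjacent in $G(w)$. That contradicts properness. The same argument with $u=v$ shows that each $Q_v$ has at most one vertex of color $i$, though this is not needed.

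For the main equivalence, recall that an independent set $I$ of $G$ is maximal exactly when it is dominating, i.e. every $v\in V(G)$ has $N_G[v]\cap I\neq\emptyset$. By the observation above, $\alpha$ is frozen if and only if for every color $i\in\{1,\dots,k\}$ and every $v\in V(G)$ (each $Q_v$ being nonempty), $N_G[v]$ meets $I_i(\alpha)$. This says precisely that every $I_i(\alpha)$ is a dominating independent set, i.e. a maximal independent set. Both directions then follow at once.

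The only subtle point is a convention check. If some color $i\le k$ is unused, then $I_i(\alpha)=\emptyset$. Such an $I_i(\alpha)$ is not maximal (since $G$ is nonempty), and $\alpha$ is not frozen either, so the equivalence remains consistent when all $k$ colors are ranged over. I expect no real obstacle beyond stating the closed-neighborhood identity carefully.
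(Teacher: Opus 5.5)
Your proposal is correct and follows essentially the same route as the paper. Both reduce the frozen condition at a vertex $x\in Q_v$ to the requirement that $N_G[v]$ meets every $I_i(\alpha)$, and then use that a maximal independent set is the same as an independent dominating set; packaging this as one equivalence via $N_{G(w)}[x]=\bigcup_{u\in N_G[v]}Q_u$, rather than two directions, and your remark on unused colors are harmless refinements.
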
 

\begin{proof}
It is straightforward to check that $I_i(\alpha)$ is an independent set since $\alpha$ is proper. Suppose first that $\alpha$ is frozen. For any vertex $v$ of $G$, choose a vertex $x\in Q_v$. Since $\alpha$ is frozen, every color occurs in the closed neighborhood of $x$. Hence for each color $i$, either $v\in I_i(\alpha)$ or some neighbor of $v$ belongs to $I_i(\alpha)$. Therefore $I_i(\alpha)$ is a dominating independent set, and hence is a maximal independent set. 

Conversely, assume that every $I_i(\alpha)$ is a maximal independent set. For any vertex $v$ of $G$, choose a vertex $x\in Q_v$ and fix a color $i$. If $v\in I_i(\alpha)$, then color $i$ appears in $Q_v$. Otherwise, maximality gives a neighbor $u$ of $v$ in $I_i(\alpha)$, and color $i$ appears in $Q_u$. Thus every color appears in $N[x]$, so no vertex can be recolored.
\end{proof}

\begin{remark}
It is well known that a proper coloring is frozen if and only if each of its color classes is an independent dominating set; in the coloring literature such colorings are also called \emph{fall colorings}. See, for example, \cite{laskar2009fall}. Lemma~\ref{frozen} is the corresponding
formulation for blowups, expressed in terms of maximal independent sets~(for convenience, we shortly write it as MISs) of the base graph.
\end{remark}

\begin{corollary}\label{coro_frozen}
Let $I_1,\dots, I_k$ be maximal independent sets of graph $G$ whose union is $V(G)$. Then there exists a blow up of $G$ with a frozen $k$-coloring.
\end{corollary}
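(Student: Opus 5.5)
The plan is to build the blowup and the coloring directly from the given sets, and then apply Lemma~\ref{frozen} to conclude that the coloring is frozen.

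\textbf{Construction.} For each vertex $v\in V(G)$, let
\[
S_v=\{i\in\{1,\dots,k\}: v\in I_i\}.
\]
Because $I_1\cup\dots\cup I_k=V(G)$, every $S_v$ is nonempty. Put $w(v)=|S_v|\ge 1$, so $G(w)$ is a genuine blowup in which every clique $Q_v$ is nonempty. Define $\alpha$ on $Q_v$ as any bijection from $Q_v$ onto $S_v$; in other words, the vertices of $Q_v$ receive the distinct colors in $S_v$, one each.

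\textbf{Properness.} There are two cases for an edge $xy$ of $G(w)$.
\begin{itemize}
\item If $x,y$ lie in the same clique $Q_v$, they get distinct colors because $\alpha$ is injective on $Q_v$.
\item If $x\in Q_u$ and $y\in Q_v$ with $uv\in E(G)$, suppose $\alpha(x)=\alpha(y)=i$. Then $u,v\in I_i$, which contradicts the independence of $I_i$.
\end{itemize}
So $\alpha$ is a proper $k$-coloring of $G(w)$.

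\textbf{Frozenness.} By construction, color $i$ appears in $Q_v$ exactly when $i\in S_v$, that is, exactly when $v\in I_i$. Hence $I_i(\alpha)=I_i$ for every $i$. Each $I_i$ is a maximal independent set, so Lemma~\ref{frozen} shows that $\alpha$ is frozen.

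One bookkeeping point needs care: each color $1,\dots,k$ must actually be used, so that $\alpha$ is a $k$-coloring and each $I_i(\alpha)$ is the intended set rather than empty. This holds because a maximal independent set of a nonempty graph is nonempty, so every $I_i\neq\emptyset$. The sets $I_i$ need not be distinct; repeated sets simply give several colors on the same set of cliques, and the argument is unaffected. No other step poses an obstacle.
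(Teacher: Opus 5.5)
Your proof is correct and matches the paper's argument: take $w(v)=|\{i: v\in I_i\}|$, give the copies of $v$ exactly the colors $i$ with $v\in I_i$, and apply Lemma~\ref{frozen}. Your extra checks (nonempty cliques from the covering hypothesis, properness, and every color being used because each $I_i$ is nonempty) fill in details the paper leaves to the reader.
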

\begin{proof}
First, we construct a blowup of $G$ by replacing each vertex $v$ with a clique of size 
$|\{i:v\in I_i\}|$.
Then, color the copies of $v$ by precisely those $i$ for which $v\in I_i$. According to Lemma~\ref{frozen}, one may check that it is a frozen $k$-coloring.
\end{proof}

\begin{lemma}\label{lem:locking}
Let $F$ be an induced subgraph of $B$, and let $\alpha$ be a proper
$\ell$-coloring of $B$ whose restriction to $F$ is frozen with respect to all
$\ell$ colors. Then every coloring reachable from $\alpha$ in $R_{\ell}(B)$
agrees with $\alpha$ on $F$.
\end{lemma}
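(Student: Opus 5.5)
We argue by induction on the length of a recoloring sequence starting from $\alpha$: it suffices to show that a single recoloring step from a coloring $\beta$ of $B$ with $\beta|_F=\alpha|_F$ cannot change the color of any vertex of $F$. Once this is established, every coloring along the sequence still agrees with $\alpha$ on $F$, since the invariant ``agrees with $\alpha$ on $F$'' is preserved at each step.

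For the single step, suppose $\beta$ is a proper $\ell$-coloring of $B$ agreeing with $\alpha$ on $F$, and some vertex $x\in V(F)$ is recolored from $\beta(x)$ to a color $j\neq\beta(x)$, producing another proper coloring. By the hypothesis that $\alpha|_F$ is frozen with respect to all $\ell$ colors, every color in $\{1,\dots,\ell\}$ occurs in $N_F[x]$ under $\alpha$. Since $j\neq \alpha(x)=\beta(x)$, the color $j$ occurs on some $y\in N_F(x)$, so $\beta(y)=\alpha(y)=j$. Because $F$ is an induced subgraph of $B$, the edge $xy$ of $F$ is also an edge of $B$. Hence giving $x$ the color $j$ creates a monochromatic edge $xy$ in $B$, contradicting properness. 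So no vertex of $F$ can change color, and a step of $R_\ell(B)$ only recolors vertices outside $F$, preserving agreement with $\alpha$ on $F$.

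There is no real obstacle. The only points to state carefully are that frozenness here is measured with respect to all $\ell$ colors of $B$, not merely the colors used on $F$, and that neighbors of $x$ in $F$ remain neighbors in $B$. (In fact only the subgraph relation is used, not inducedness.) The extra neighbors of $x$ in $B\setminus F$ can only forbid more colors, so they do not affect the argument.
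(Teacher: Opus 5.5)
Your proof is correct and essentially the paper's argument. The paper looks at the first move that changes a vertex of $F$ and notes that, at that moment, $F$ is still colored as under $\alpha$, so every target color already appears on a neighbor inside $F$; this is your single-step invariant, phrased as a minimal counterexample instead of an induction, and your side remark that only the subgraph relation (not inducedness) is needed is also accurate.
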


\begin{proof}
Suppose a recoloring sequence changes a vertex of $F$, and consider its first
such move. Until that move, the coloring of $F$ is unchanged. Every vertex of
$F$ still sees all $\ell$ colors inside its closed neighborhood in $F$, so the
proposed move is impossible. Recoloring vertices outside $F$ cannot remove
these internal obstructions.
\end{proof}
We also need a lemma from Dyer et al.~\cite{dyer2006randomly}.
\begin{lemma}[\cite{dyer2006randomly}]\label{lemma3.4}
    For a $d$-degenerate graph $G$, $R_{k}(G)$ is connected when $k\geq d + 2$.
\end{lemma}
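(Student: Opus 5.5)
The statement just before this point is Lemma~\ref{lemma3.4} (Dyer et al.). It is quoted from the literature, so the paper presumably does not reprove it. If I were to prove it, I would use induction on $|V(G)|$ together with a degeneracy ordering.

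\emph{Setup.} Fix $k\ge d+2$ and choose a vertex $v$ of degree at most $d$; one exists because $G$ is $d$-degenerate. The graph $G-v$ is again $d$-degenerate, so by induction $R_k(G-v)$ is connected. The base case is the empty graph (or a single vertex), where the claim is trivial.

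\emph{Lifting recolorings of $G-v$.} Take two $k$-colorings $\alpha,\beta$ of $G$. Their restrictions to $G-v$ are joined by a sequence $\gamma_0,\gamma_1,\dots,\gamma_m$ of single-vertex moves in $R_k(G-v)$. I lift this sequence to $G$ by keeping track of a color for $v$.
\begin{itemize}
\item Suppose the move from $\gamma_{t-1}$ to $\gamma_t$ recolors a neighbor $u$ of $v$ to the color $v$ currently holds. Before making that move, recolor $v$ to some color that is absent from its neighborhood in both $\gamma_{t-1}$ and $\gamma_t$.
\item Such a color exists. The two colorings differ only at $u$, so together they use at most $d+1$ colors on $N(v)$. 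Since $k\ge d+2$, at least one color remains free.
\item After this adjustment the move of $u$ is legal in $G$, and $v$ stays properly colored.
\end{itemize}

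\emph{Finishing.} At the end of the lifted sequence, the coloring agrees with $\beta$ on $G-v$. The color $\beta(v)$ differs from every color in $N(v)$ under $\beta$, so one final move recolors $v$ to $\beta(v)$, if it does not already hold it. This connects $\alpha$ to $\beta$ in $R_k(G)$.

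The only delicate point is the counting in the lifting step. The free color for $v$ must avoid the neighbor colors both before and after $u$'s move, which is exactly where $k\ge d+2$ rather than $k\ge d+1$ is needed. Everything else is routine bookkeeping.
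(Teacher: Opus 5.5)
Your proof is correct. The paper gives no proof of its own here (it only cites Dyer et al.), and your induction along a degeneracy ordering is exactly the standard argument behind that citation: you lift a recoloring sequence of $G-v$, and before a neighbor takes $v$'s color you move $v$ to one of the at least $k-(d+1)\ge 1$ colors missing from $N(v)$ both before and after that neighbor's move. This free color automatically differs from $v$'s current color (which is the new color of $u$), so the adjustment is a genuine single-vertex move.
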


\section{Proof of Theorem~\ref{coro1.4}}\label{counterC1}
In this section, we construct the counterexamples to Question \ref{Question-BCS} and Conjecture~\ref{Conjecture-BCS}. Let $G_m=C_{3m}$ for odd $m\geq 3$. Write $G_m=v_0v_1\cdots v_{3m-1}v_0$. Define a blowup $H_m$ of $G_m$ by replacing every vertex $v_{3j}$ with a clique of size two and leaving all other replacement cliques of size one. Let 
$$A_r=\{v_{3j+r}: 0\leq j< m\}, \qquad r=0,1,2.$$
Each $A_r$ is a maximal independent set of $G_m$. Applying Lemma~\ref{frozen} to 
$A_0, A_0, A_1, A_2$ gives a frozen 4-coloring of $H_m$.

\begin{lemma}\label{$H_m$}
For every odd $m\geq 3$, the graph $H_m$ satisfies $\chi(H_m)=3$ and $H_m$ is not recolorable.
\end{lemma}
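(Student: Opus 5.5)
The plan has two parts: compute $\chi(H_m)=3$, and then exhibit an isolated vertex of $R_4(H_m)$ inside a reconfiguration graph that has more than one vertex. Since $\chi(H_m)+1=4$, this shows that $R_4(H_m)$ is disconnected, so $H_m$ is not recolorable.

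\emph{Chromatic number.} For the lower bound, each vertex $v_{3j}$ is replaced by a clique of size two, and this clique together with the copy of the adjacent vertex $v_{3j+1}$ forms a triangle. Hence $\chi(H_m)\ge 3$. For the upper bound, I will write the two copies of $v_{3j}$ as $x_j,x_j'$ and give an explicit $3$-coloring. The natural attempt is to color $v_{3j+1}$ and $v_{3j+2}$ so that the pair $\{x_j,x_j'\}$ gets the remaining two colors. One has to be careful here: the naive assignment $v_{3j+1}\mapsto 1$, $v_{3j+2}\mapsto 2$ forces $\{x_j,x_j'\}$ to avoid both $2$ (from $v_{3j-1}$) and $1$ (from $v_{3j+1}$), leaving only color $3$ for two vertices, which is impossible. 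So the coloring must be chosen so that, for every $j$, the vertices $v_{3j-1}$ and $v_{3j+1}$ receive the same color $c_j$. Then $\{x_j,x_j'\}$ receives the other two colors, and we only need $c_j\ne c_{j+1}$, because $v_{3j+1}$ and $v_{3j+2}$ are adjacent. The constraints therefore form a cycle of length $m$ on the values $c_0,\dots,c_{m-1}$, which must be properly colored from three colors. A cycle is $3$-colorable, so such $c_j$ exist. For odd $m$, three colors are genuinely needed for the cycle of $c_j$'s, but three are available. I will write this down explicitly, for instance with $c_j$ alternating between $1$ and $2$ and $c_{m-1}=3$.

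\emph{Non-recolorability.} By Lemma~\ref{frozen}, the coloring obtained from $A_0,A_0,A_1,A_2$ is a frozen $4$-coloring of $H_m$, so it is an isolated vertex of $R_4(H_m)$. Concretely, the two copies of $v_{3j}$ get colors $1,2$, $v_{3j+1}$ gets $3$, and $v_{3j+2}$ gets $4$. The $3$-coloring from the first part is also a proper $4$-coloring, and it differs from the frozen one. Therefore $R_4(H_m)$ has at least two vertices, one of which is isolated, so it is disconnected. Since $4=\chi(H_m)+1$, $H_m$ is not recolorable.

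The main obstacle is the upper bound $\chi(H_m)\le 3$, specifically the wrap-around consistency of the $3$-coloring. This is where the parity of $m$ could interfere, so I will check the cycle of the $c_j$ carefully. It also explains why the frozen coloring needs $4$ colors even though the graph is $3$-colorable.
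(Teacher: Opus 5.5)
Your proposal is correct and matches the paper's proof: the same triangle lower bound, the same $3$-coloring, and the same frozen $4$-coloring from $A_0,A_0,A_1,A_2$. In that coloring, $v_{3j+1}$ and $v_{3j+2}$ receive $c_j$ and $c_{j+1}$ for a proper $3$-coloring $(c_j)$ of $C_m$, and the clique replacing $v_{3j}$ gets the two other colors. Your remark that the $3$-coloring is a second vertex of $R_4(H_m)$, so the isolated frozen coloring really disconnects it, makes explicit a step the paper leaves implicit.
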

\begin{proof}
The frozen coloring above shows that $R_4(H_m)$ contains an isolated vertex. It remains to prove $\chi(H_m)=3$. 

Clearly, we have  $\chi(H_m)\geq3$ since $H_m$ contains triangle. Because $m$ is odd, $C_m$ has a proper 3-coloring $c_0,c_1,\dots, c_{m-1}$. For every $j$, color the two vertices in the replacement clique of $v_{3j}$ with the two colors different from $c_j$. Color the vertices replacing $v_{3j+1}$ and $v_{3j+2}$ with $c_j$ and $c_{j+1}$, respectively. Consecutive replacement cliques receive disjoint color sets, and hence this is a proper 3-coloring of $H_m$. Therefore $\chi(H_m)=3$. Since $H_m$ has a frozen 4-coloring and $4=\chi(H_m)+1$, the graph $R_4(H_m)$ is disconnected.
\end{proof}

We have verified that this example has a blowup that is not recolorable. It remains to prove that $C_{3m}~(\text{for odd }m\geq 3)$ is prime and all of its induced subgraphs are recolorable.

\begin{lemma}\label{C_nprime}
    For every odd $n\geq 5$, $C_n$ is prime and every induced subgraph of $C_n$ is recolorable.
\end{lemma}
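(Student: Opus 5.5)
Primality and recolorability of induced subgraphs will be proved separately.

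\emph{Primality.} Let $S$ be a module of $C_n$, $n\ge 5$, with $2\le |S|<n$. Since $C_n$ is connected and $S\ne V(C_n)$, some vertex $x\notin S$ has a neighbor in $S$. Because $S$ is a module, $x$ is then adjacent to every vertex of $S$. As $\deg(x)=2$, this forces $|S|=2$ and $S=N(x)=\{v_{i-1},v_{i+1}\}$ with $x=v_i$. Now consider $v_{i+2}$. It is adjacent to $v_{i+1}$, and it is not adjacent to $v_{i-1}$ because $n\ge5$. Also $v_{i+2}\ne v_{i-1}$ since $n\ge 4$, so $v_{i+2}\notin S$. Thus $v_{i+2}$ distinguishes the two vertices of $S$, a contradiction. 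Hence $C_n$ is prime. (Oddness is not needed for this part.)

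\emph{Recolorability.} Let $F$ be an induced subgraph of $C_n$. There are two cases.

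\begin{itemize}
\item If $F=C_n$, then $\chi(F)=3$ because $n$ is odd. Hence we need $k\ge 4$. Since $C_n$ is $2$-degenerate, Lemma~\ref{lemma3.4} gives connectivity of $R_k(F)$ for $k\ge 4$.
\item If $F$ is a proper induced subgraph, it is a disjoint union of paths (possibly with isolated vertices), so it is a forest and $1$-degenerate. Lemma~\ref{lemma3.4} gives connectivity for $k\ge 3$, which covers $k\ge\chi(F)+1$ whenever $\chi(F)=2$.
\end{itemize}

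The remaining subcase is $\chi(F)=1$, i.e.\ $F$ is edgeless, and here $k=2$ must also be handled. This is the main point to check, since degeneracy only gives $k\ge 2$ when $d=0$. An edgeless graph is $0$-degenerate, so Lemma~\ref{lemma3.4} applies for $k\ge 2$. One can also see it directly: every assignment of colors is proper, and any coloring can be changed to any other one vertex at a time. The empty graph, if it counts as an induced subgraph, is trivial.

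Both parts are routine, so I do not expect a real obstacle. The only care needed is matching the threshold $\chi(F)+1$ against $d+2$ in each case: $\chi=3,d=2$ for $F=C_n$; $\chi=2,d=1$ for forests with an edge; $\chi=1,d=0$ for edgeless graphs. Oddness of $n$ matters exactly in the first of these, since it makes $\chi(C_n)=3$ rather than $2$. For even $n$ the case $k=3$ would fail, because $C_n$ has frozen $3$-colorings when $3\mid n$.
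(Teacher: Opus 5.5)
Your proof is correct and follows essentially the same route as the paper. The degree-two bound forces a nontrivial module to be a pair, which an outside vertex then distinguishes; Lemma~\ref{lemma3.4} handles $C_n$ itself ($2$-degenerate with $\chi=3$) and its proper induced subgraphs (forests). Your explicit witness $v_{i+2}$ and separate threshold check for edgeless subgraphs just make the paper's terser steps precise.

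Only your closing aside overreaches, and it does not affect the proof: $k=3$ does not fail for every even $n$ (for instance $C_4$ is $3$-mixing), and frozen $3$-colorings of $C_n$ exist only when $3\mid n$.
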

\begin{proof}
    If $M$ is a nontrivial module of $C_n$, then a vertex outside $M$ adjacent to $M$ must be adjacent to all vertices of $M$. Since $C_n$ has maximum degree two, $|M|\leq 2$. But $|M|\geq 2$, which means $|M|=2$. The two vertices of $M$ have distinct closed neighborhoods, whether or not they are adjacent, which leads to a contradiction.

    The cycle $C_n$ is 2-degenerate and has chromatic number three, hence its recoloring graph is connected for every $k\geq 4$. Proper induced subgraphs of $C_n$ are forests, hence recolorable by Lemma ~\ref{lemma3.4}.
\end{proof}

\begin{proof}[\bf Proof of Theorem \ref{coro1.4}(i)]

For odd $m\geq 3$, let $\mathcal{C}_m$ be the class of all induced subgraphs of all blowups of $C_{3m}$. Then the non-recolorable graph $H_m$ belongs to $\mathcal{C}_m$. We show that every prime graph in $\mathcal{C}_m$ is recolorable.
    
Let $F\in \mathcal{C}_m$ be prime. By definition, $F$ is an induced subgraph of some blowup of $C_{3m}$. In any such blowup, each replacement clique is a module. If $F$ intersects two or more replacement cliques, then for each such intersection $F \cap Q_v$, this set is a module of $F$. Since $F$ is prime, every module of $F$ is trivial; hence $\lvert F \cap Q_v \rvert \le 1$ for every vertex $v$ of $C_{3m}$. Therefore $F$ contains at most one vertex from each replacement clique, so $F$ is isomorphic to an induced subgraph of $C_{3m}$. By Lemma \ref{C_nprime}, every induced subgraph of $C_{3m}$ is recolorable (since $3m$ is odd and $3m \ge 9$). Thus $F$ is recolorable. On the other hand, if $F$ intersects only a single replacement clique, then $F$ is a complete graph, and hence is recolorable.
   
Thus every prime graph in $\mathcal{C}_m$ is recolorable, but $\mathcal{C}_m$ contains a non-recolorable graph (namely $H_m$). As a consequence, there are infinitely many hereditary classes in which every prime graph is recolorable but some graph in the class is not recolorable.
\end{proof}

\begin{proof}[\bf Proof of Theorem \ref{coro1.4}(ii)]
By Lemma \ref{C_nprime}, for odd $m\geq 3$, $C_{3m}$ is prime and every induced subgraph of $C_{3m}$ is recolorable, while Lemma \ref{$H_m$} gives a non-recolorable blowup $H_m$ of $C_{3m}$. Therefore, there are infinitely many prime graphs $G$ such that every induced subgraph of $G$ is recolorable but some blowup of $G$ is not recolorable.
\end{proof}

\section{Proof of Theorem \ref{thm:lifting}}
Let $p\leq r-1$ and fix a proper $p$-coloring of $H(a)$. Let $I_1, \dots, I_r \subseteq S$ be the supports of the frozen coloring of
$H(a)$, clearly, they are MISs of $H$ by Lemma~\ref{frozen}, and
\begin{equation}\label{eq:coverage}
  a(v) = |\{ i \in [r] : v \in I_i \}| \qquad (v \in S).
\end{equation}

\textbf{Step 1: Choose the enlarged core.}
Partition the outside vertices into
\begin{align*}
  Z &= \{\, x \in V(G) \setminus S : N_G(x) \cap I_i \neq \varnothing
        \text{ for all } i \in [r] \,\},\\
  Y &= V(G) \setminus (S \cup Z).
\end{align*}
Write $Z = \{z_1, \dots, z_m\}$ and $Y = \{y_1, \dots, y_q\}$, and put
$T = S \cup Z$. Each $I_i$ remains a MIS of $G[T]$: it dominates $S$ by its
maximality in $H$ and dominates $Z$ by definition. Crucially, the original
supports are not extended into $Y$. For each $j \in [m]$, choose a MIS $J_j$ of $G[T]$ containing $z_j$. This is
constructive: fix a vertex ordering, extend $\{z_j\}$ greedily to a MIS $J_j$ of $G[T]$.
Define
\[
  d(v) = |\{ j \in [m] : v \in J_j \}| \quad (v \in T),
  \qquad
  t = q + 1.
\]
In particular, $d(z_j) \geq 1$.

\medskip
\textbf{Step 2: Specify every weight.}
Set
\begin{equation}\label{eq:weights}
  w(v) =
  \begin{cases}
    t\,a(v) + d(v), & v \in S,\\
    d(v),           & v \in Z,\\
    1,              & v \in Y,
  \end{cases}
  \qquad
  \ell = tr + m.
\end{equation}
Every weight is positive. Let $B = G(w)$, and let $F$ be its induced subgraph
on the replacement cliques corresponding to $T$.

For $v \in S$, split $Q_v$ into $t\,a(v)$ \emph{base} vertices and $d(v)$
\emph{auxiliary} vertices. For $z \in Z$, all vertices of $Q_z$ are auxiliary.
More precisely, there is a base vertex $b_{v,i,h}$ whenever $v \in I_i$ and
$h \in [t]$, and an auxiliary vertex $u_{v,j}$ whenever $v \in J_j$.
By \eqref{eq:coverage}, these counts agree with the weights in
\eqref{eq:weights}.

\medskip
\textbf{Step 3: Color and freeze the core.}
Use $tr$ distinct base colors $(i,h)$, where $i \in [r]$ and $h \in [t]$, and
$m$ further auxiliary colors $e_1, \dots, e_m$. Color $b_{v,i,h}$ with $(i,h)$ and $u_{v,j}$ with $e_j$.

The supports on $G[T]$ are $t$ copies of every $I_i$, together with
$J_1, \dots, J_m$. These supports are independent, so the coloring is proper; they are maximal,  so it is frozen on $F$ by Lemma~\ref{frozen}. Every color
occurs on $F$, since every support is nonempty.

\medskip
\textbf{Step 4: Extend to all outside singletons.}
For $y_h \in Y$, choose an index $i_h$ such that $N_G(y_h) \cap I_{i_h} = \varnothing$,
and give its single clone color $(i_h, h)$. This is available because
$h \leq q < t$. It creates no edge conflict with the core, because this base
color occurs there only on $I_{i_h}$. The outside colors are pairwise
distinct, since their second coordinates are distinct. Thus arbitrary edges
inside $Y$ also cause no conflict. We have constructed a proper $\ell$-coloring $\alpha$ of $B$. Its restriction
to $F$ is frozen, so by Lemma~\ref{lem:locking} every coloring reachable from $\alpha$ agrees with $\alpha$ on $F$.

\medskip
\textbf{Step 5: Exhibit a coloring below the obstruction level.}
Partition the base vertices into $t$ groups, each inducing a copy of $H(a)$.
Color these groups using $t$ disjoint-palette copies of the given proper
$p$-coloring. Edges between groups cause no conflict because their palettes
are disjoint. This uses $tp$ colors.

For each $j$, give all auxiliary vertices indexed by $J_j$ a single new
color. This uses $m$ colors and is proper because $J_j$ is independent. Give
the $q$ vertices over $Y$ distinct further colors. This explicit coloring
$\beta$ uses at most
$ tp + m + q \leq t(r-1) + m + q
  = tr + m - 1
  =\ell - 1,$
since $t = q + 1$. This proves the required chromatic-number bound.

\medskip
\textbf{Step 6: Separate two recoloring components.}
View $\beta$ as an $\ell$-coloring omitting one color. Every coloring
reachable from $\alpha$ agrees on $F$ with a coloring that uses all $\ell$
colors. Consequently, $\beta$ is not reachable from $\alpha$, and
$R_{\ell}(B)$ is disconnected.
\hfill$\qed$

\section{Proof of Theorem \ref{Induced-hole restriction}}
In this section, we give the proof of Theorem~\ref{Induced-hole restriction}. We first treat even holes separately. This case already illustrates the construction of a frozen obstruction on an induced cycle and its transfer to the ambient graph
via Theorem~\ref{thm:lifting}.

\begin{proposition}\label{prop:evenhole}
If $G$ contains an induced cycle $C_{2m}$ with $m \geq 3$, then $G$ has a
non-recolorable blowup. Explicitly, there are constructible positive weights
$w$ and an integer $\ell$ with $\chi(G(w)) \leq \ell - 1$ and
$R_{\ell}(G(w))$ disconnected.
\end{proposition}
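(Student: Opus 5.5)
The plan is to apply Theorem~\ref{thm:lifting} with $H = C_{2m}$, the induced even hole. It therefore suffices to exhibit a blowup $H(a)$ of $C_{2m}$ that has a frozen $r$-coloring together with a proper coloring using at most $r-1$ colors. Theorem~\ref{thm:lifting} then supplies the constructible weights $w$ and the integer $\ell$ with $\chi(G(w))\le \ell-1$ and $R_\ell(G(w))$ disconnected.

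\textbf{Construction on $C_{2m}$.} Write the cycle as $v_0v_1\cdots v_{2m-1}v_0$. Since $2m\ge 6$, we can choose maximal independent sets so that the blowup is bipartite-like, with small cliques, while the frozen coloring uses many colors. First I would take the two parity classes $E=\{v_{2j}\}$ and $O=\{v_{2j+1}\}$. Both are maximal independent sets. Then I would add one extra maximal independent set $I$ that is not a parity class: for example, one containing both $v_0$ and $v_3$, obtained by greedily extending $\{v_0,v_3\}$. This is independent because $m\ge 3$ means $v_0v_3$ is not an edge.

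Corollary~\ref{coro_frozen} applied to $E,O,I$ gives a blowup $H(a)$ with a frozen $3$-coloring, where $a(v)=1+[v\in I]$. The vertices of $I$ become edges ($K_2$'s) and all other vertices stay singletons. We need $\chi(H(a))\le 2$, but $H(a)$ contains triangles, so this fails. I would therefore use more copies to widen the gap: take $E$ and $O$ each with multiplicity $s$, together with the non-parity set $I$, giving $r=2s+1$.

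\textbf{Bounding the chromatic number.} Each clique then has size $s$ or $s+1$, and two adjacent cliques have total size at most $2s+1$. Following the proof of Lemma~\ref{$H_m$}, I would build a proper coloring with $2s$ colors by assigning colors around the cycle so that consecutive cliques receive disjoint sets. Each enlarged clique over $v\in I$ has size $s+1$ and its neighbors have size $s$, provided no two vertices of $I$ are adjacent, which holds since $I$ is independent. So we need sets $S_v\subseteq[2s]$ with $|S_v|=a(v)$ and $S_v\cap S_{v+1}=\emptyset$.

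Taking $s=1$ is too tight ($a(v)=2$ would force its neighbors into $\emptyset$), so take $s\ge 2$, for example $s=2$: sizes $2$ or $3$ from a palette of $4$ colors. The requirement is a circular arrangement in which a $3$-set has $2$-set neighbors with both neighbors inside its complement, which is a single color. Hence both neighbors of an enlarged vertex are squeezed into that one color, which is again too tight.

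To avoid this, I would instead pick the frozen data differently: use $E$, $O$, and $I$ together with a complementary non-parity set $I'$ (e.g.\ extending $\{v_1,v_4\}$), so that the coverage $a$ becomes more balanced. I would then compare $\chi(H(a))$, which equals the circular-type bound $\max$ over consecutive pairs of $a(v)+a(v+1)$ for an even cycle blowup when feasible, against $r$.

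\textbf{Main obstacle.} The key step, and the expected obstacle, is choosing the family of maximal independent sets so that $\max_v\bigl(a(v)+a(v+1)\bigr)\le r-1$ and a valid disjoint-palette assignment around the even cycle exists. Since $C_{2m}$ blowups have chromatic number equal to the maximum edge weight whenever the cycle is even (a greedy circular interval assignment works), the plan reduces to a combinatorial choice: find maximal independent sets $I_1,\dots,I_r$ covering $C_{2m}$ such that no edge $v_kv_{k+1}$ is covered by all $r$ sets together. One such choice is to cyclically shift an MIS pattern of gaps $2,3$ for $m\ge3$ and then verify the per-edge bound.
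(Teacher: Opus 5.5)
Your reduction is the paper's: apply Theorem~\ref{thm:lifting} with $H=C_N$, $N=2m$. Everything then rests on a seed blowup $C_N(a)$ that has a frozen $r$-coloring and a proper coloring with at most $r-1$ colors. That seed is the whole content of the proposition beyond the lifting theorem, and your proposal never supplies one.

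The families you work out all fail, for a reason worth isolating. With supports $I_1,\dots,I_r$, each support contains at most one endpoint of an edge. So $a(v_k)+a(v_{k+1})$ is the number of supports meeting $v_kv_{k+1}$. Hence the slack $r-\max_k\bigl(a(v_k)+a(v_{k+1})\bigr)$ is the minimum, over edges, of the number of supports avoiding both endpoints.

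\begin{itemize}
\item The parity classes $E,O$ avoid no edge. Adding $s$ copies of each therefore never creates slack, for every $s$, not only $s=1,2$.
\item A non-parity MIS avoids exactly the middle edges of its gaps of length $3$. So $E,O,I,I'$ still fails. On $C_6$ with $I=\{v_0,v_3\}$ and $I'=\{v_1,v_4\}$ you get $a=(2,2,1,2,2,1)$, the edge $v_0v_1$ is met by all four supports, and $\chi(C_6(a))=4=r$.
\end{itemize}

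Your closing sentence names the right family but leaves ``verify the per-edge bound'' undone. It also omits the condition the verification hinges on: the MIS must have at least one gap of length $3$. Shifts of a parity class just return $E$ and $O$.

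To finish along the paper's lines:
\begin{itemize}
\item Take an MIS $I$ of $C_N$ whose gaps lie in $\{2,3\}$, with $b_3\ge 1$ gaps equal to $3$. The paper uses $|I|=\rho=\lceil N/3\rceil$, where $b_3=N-2\rho\ge 2$. Any such $I$ works, since $N=2b_2+3b_3\ge 2|I|+1$.
\item Take all $N$ cyclic shifts, keeping repetitions. Each vertex lies in exactly $|I|$ shifts, so $a\equiv |I|$. Lemma~\ref{frozen} then gives a frozen $N$-coloring of $C_N(a)$.
\item Because $a$ is uniform and $N$ is even, use a palette of $|I|$ colors on the even positions and a disjoint one on the odd positions. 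This is a proper $2|I|$-coloring, and $2|I|\le N-1$.
\item With $r=N$ and $p=2|I|$, Theorem~\ref{thm:lifting} yields the weights $w$ and the integer $\ell$.
\end{itemize}

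This route also makes your general claim unnecessary, namely that $\chi$ of an even-cycle blowup equals the maximum edge weight. The claim is true, since blowups of bipartite graphs are perfect. However, the greedy circular-interval argument you cite for it does not close up in general. For example, it fails on $C_6$ with weights $2,1,1,1,1,1$ and $3$ colors.
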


\begin{proof}
Write $N = 2m \geq 6$ for the length of the induced cycle.

\medskip
\textbf{Step 1: A uniform frozen seed from cyclic shifts.}
Put $H = C_N$ and
\[
  \rho = \Bigl\lceil \frac{N}{3} \Bigr\rceil .
\]
We first construct an independent dominating set $I$ of cardinality $\rho$.
Let
$b_2 = 3\rho - N, b_3 = N - 2\rho$.
Both are nonnegative and satisfy
$b_2 + b_3 = \rho, 2b_2 + 3b_3 = N$.
Take a circular gap sequence consisting of $b_2$ gaps of length two and $b_3$
gaps of length three. Starting at $v_0$, take the vertices at successive
partial sums, omitting the final sum $N$. The selected set $I$ is independent
because every gap is at least two, and dominating because every gap is at
most three. Hence $I$ is a MIS.

Let $I_0, \dots, I_{N-1}$ be all $N$ cyclic shifts of $I$. Retain repeated
sets when different shifts coincide. Each vertex occurs in exactly $\rho$
shifts. Lemma~\ref{frozen} therefore gives a frozen $N$-coloring of the
uniform blowup $a(v) = \rho (v \in V(C_N))$. A proper $2\rho$-coloring uses one $\rho$-color palette on the even-indexed
replacement cliques and a disjoint palette on the odd-indexed cliques.

For even $N \geq 6$,
\begin{equation}\label{eq:seedbound}
  2\Bigl\lceil \frac{N}{3} \Bigr\rceil \leq N - 1.
\end{equation}
For completeness, write $N = 6s$, $6s+2$, or $6s+4$. The differences
$N - 2\rho$ are respectively $2s$, $2s$, and $2s$; the allowed values of $N$
make these at least two. In particular, the bound $\geq 1$ in
\eqref{eq:seedbound} suffices.

Thus the seed blowup $H(a)$ has a frozen $N$-coloring and a proper
$2\rho$-coloring with $2\rho \leq N - 1$. For example, $C_8$ with weight
three at every vertex has a frozen eight-coloring and a proper six-coloring.

\medskip
\textbf{Step 2: Explicit weights in the ambient graph.}
Suppose $G$ contains this induced $C_N$. Apply Theorem~\ref{thm:lifting} with
the shifted supports above: $r = N$, $a(v) = \rho$, and $p = 2\rho \leq r-1$.
Define $Z, Y, J_j, d$ as in Theorem~\ref{thm:lifting}, and write $m = |Z|$,
$q = |Y|$, and $t = q + 1$. Every weight is given by
\begin{equation}\label{eq:ambientweights}
  w(v) =
  \begin{cases}
    t\rho + d(v), & v \in V(C_N),\\
    d(v),         & v \in Z,\\
    1,            & v \in Y,
  \end{cases}
  \qquad
  \ell = tN + m.
\end{equation}
The $l$-coloring has a frozen core on $F$; the second coloring
uses $2\rho t + m + q$ colors, and $\ell - (2\rho t + m + q) = t(N - 2\rho) - q \geq t - q = 1$.
Therefore $\chi(G(w)) \leq \ell - 1$ and $R_{\ell}(G(w))$ is disconnected. This completes the proof.
\end{proof}

\begin{proof}[\bf Proof of Theorem \ref{Induced-hole restriction}]
It suffices to prove that for $N=6$ or $N\geq 8$, there exists a blowup of $G$ which is non-recolorable.
Put
\[
  \rho = \lceil N/3 \rceil,
  \qquad
  \sigma = \lfloor N/2 \rfloor .
\]
For the indicated values of $N$, we have $\rho < \sigma$: indeed $N = 6$
gives $2 < 3$, and for $N \geq 8$ one has
$\lceil N/3 \rceil \leq (N+2)/3 < (N-1)/2 \leq \lfloor N/2 \rfloor$.

\medskip
\textbf{Step 1: The frozen seed.}
The gap-two/gap-three construction of Proposition~\ref{prop:evenhole} also applies
for these (possibly odd) values of $N$: with $b_2 = 3\rho - N \geq 0$ and
$b_3 = N - 2\rho \geq 0$, it produces a MIS of size $\rho$. Take all $N$
cyclic shifts of this MIS and repeat each support $\sigma$ times. Each vertex lies in exactly $\rho\sigma$ of these supports, so the uniform seed weights $a(v) = \rho\sigma(v \in V(C_N))$
give a frozen $N\sigma$-coloring of the blowup $C_N(a)$, by
Lemma~\ref{frozen}.

\medskip
\textbf{Step 2: The low seed coloring.}
For a proper low coloring, take all $N$ cyclic shifts of a maximum
independent set of size $\sigma$, repeating each one $\rho$ times. Every
vertex again has coverage $\rho\sigma$, and these independent supports give a
proper $N\rho$-coloring of the same blowup. Since $\rho < \sigma$, we have
$N\rho \leq N\sigma - 1$, so Theorem~\ref{thm:lifting} applies with
$r = N\sigma$ and $p = N\rho$.

\medskip
\textbf{Step 3: Explicit ambient weights.}
Construct $Z, Y, J_j, d$ from the frozen seed supports as in
Theorem~\ref{thm:lifting}, writing $m = |Z|$, $q = |Y|$, $t = q + 1$, the
ambient weights are
\begin{equation*}\label{eq:mainweights}
  w(v) =
  \begin{cases}
    t\rho\sigma + d(v), & v \in V(C_N),\\
    d(v),               & v \in Z,\\
    1,                  & v \in Y,
  \end{cases}
  \qquad
  \ell = tN\sigma + m.
\end{equation*}
The high coloring has a permanently frozen induced core. The low coloring
uses at most $tN\rho + m + q$ colors, and
  $\ell - (tN\rho + m + q)
  = tN(\sigma - \rho) - q
  \geq t - q
  = 1.$
Hence $\chi(G(w)) \leq \ell - 1$, and the permanently frozen core establishes
that $R_{\ell}(G(w))$ is disconnected. This completes the proof.
\end{proof}

\section{Proof of Theorem \ref{thm:3457}}
As shown in the introduction, it suffices to prove that every blowup of $C_7$ is recolorable.
\begin{theorem}\label{thm:c7}
    For every choice of positive integers \(w_0,\ldots,w_6\), the clique blowup \(G=C_7(w_0,\ldots,w_6)\) is recolorable. Equivalently,
\[
R_k(G)\text{ is connected for every }k\ge \chi(G)+1.
\]
\end{theorem}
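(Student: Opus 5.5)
We prove Theorem~\ref{thm:c7} by induction on $|V(G)|=\sum_i w_i$, as outlined in the introduction. The base cases are those where every $w_i=1$, i.e.\ $G=C_7$ itself. Here Lemma~\ref{lemma3.4} applies, since $C_7$ is $2$-degenerate with $\chi=3$, so $k\ge 4$ suffices. Throughout we write $Q_0,\dots,Q_6$ for the replacement cliques, indices mod $7$. We fix $k\ge\chi(G)+1$ and an optimal coloring $\gamma$ of $G$ with $\chi(G)$ colors. It suffices to show that every $k$-coloring $\alpha$ can be recolored to (a color permutation of) $\gamma$. Permutations are harmless, since with at least one spare color we can permute colors class by class, using the spare color as a buffer.

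\textbf{Step 1: colorings with an unused or singleton color.} Suppose some color $c$ is unused by $\alpha$, or some color class is a single vertex $x\in Q_v$. In the singleton case we remove $x$ to get $G'=C_7(w-e_v)$; if $w_v=1$, $G'$ is an induced subgraph of $C_7$, i.e.\ a path forest, and we handle it directly by degeneracy. Since $\chi(G')\le\chi(G)$, we have $k-1\ge\chi(G')+1$ whenever $\chi(G')\le\chi(G)-1$. In general, though, the budget is more delicate: we want $k$ colors on $G'$ with $x$'s color reserved. Our plan is to choose $\gamma$ so that some vertex $x_0$ is a singleton class of $\gamma$ as well. This is possible because $\chi(C_7(w))=\max(\max_i(w_i+w_{i+1}),\lceil\sum w_i/3\rceil)$, and an optimal coloring can be adjusted to isolate one vertex when there is slack. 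Alternatively, when no slack exists, we apply induction to $G-x$ with palette $[k]\setminus\{c\}$ where $k-1\ge \chi(G-x)+1$ needs checking. Then we recolor $G-x$ within $k-1$ colors, keeping $x$ fixed with its private color, to a coloring that extends to $\gamma$, and finally adjust $x$. An unused color is handled the same way after one move creates a singleton: recolor any vertex that can take the unused color, when available.

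\textbf{Step 2: every class has size $\ge2$ and all $k$ colors are used.} Each color class corresponds, via $I_c(\alpha)$, to an independent set of $C_7$ of size $2$ or $3$. The maximum independent sets of $C_7$ have size $3$, and for each edge $v_iv_{i+1}$ there is a unique maximum independent set avoiding both endpoints, namely $\{v_{i+2},v_{i+4},v_{i+6}\}$ up to orientation. Counting gives $\sum_c|I_c|=\sum w_i$ and $k\ge\chi+1>\sum w_i/3$, so some class is small, of size $2$: say $I_c=\{v_a,v_b\}$. We then do a finite case analysis on the position of $\{a,b\}$ (distance $2$ or $3$ on $C_7$). The goal is to show that within three moves we can empty one vertex of this class into other colors, producing a singleton class and reducing to Step~1. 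The uniqueness property controls which colors can be blocked around $Q_a$: a vertex of $Q_a$ can be moved to color $c'$ unless $I_{c'}$ meets $\{v_{a-1},v_a,v_{a+1}\}$. Counting colors seen by $Q_a$ versus $k>\max(w_{i}+w_{i+1})$ should guarantee a free color, possibly after first moving a blocking vertex.

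The main obstacle is Step~2. The difficulty is guaranteeing that the short sequence of moves exists in all configurations, i.e.\ that not every color is locked near both $v_a$ and $v_b$. I would first try a double-counting argument: assume both vertices are frozen and show that the classes then cover the $7$ positions so tightly that $\sum|I_c|\ge 3k-O(1)$, contradicting $k\ge\chi+1\ge\lceil\sum w_i/3\rceil+1$. I would fall back to explicit case checking on the pattern of size-$3$ classes (which are maximum independent sets of $C_7$, i.e.\ rotations of $\{0,2,4\}$) only where the counting is not tight.
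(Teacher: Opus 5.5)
Your outline has the paper's overall shape: induction, reduction to a coloring with an unused or singleton color, and a short forcing argument built on the unique independent triple of $C_7$ avoiding an edge. Both steps, however, have genuine gaps.

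\textbf{Step 1.} The chromatic budget does not close when you delete only the singleton vertex $x$. For $k=\chi(G)+1$ you have $k-1=\chi(G)$ colors for $G-x$, and $\chi(G-x)$ is often still $\chi(G)$.
\begin{itemize}
\item For $w=(3,\dots,3)$ we have $n=21$ and $\chi=7$, so every optimal class has size three. Hence no $\gamma$ has a singleton class. Moreover, $\chi(G-x)=7=k-1$ for $k=8$, so your check fails exactly in the ``no slack'' case.
\item For $w=(5,5,1,1,1,1,1)$ we have $\chi=\omega=10$ and $k=11$. After deleting the vertex of $Q_4$, the clique $Q_0\cup Q_1$ is frozen under $10$ colors, so $R_{10}(G-x)$ really is disconnected. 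Degeneracy does not help either: $G-x$ is then a clique blowup of $P_6$, not an induced subgraph of $C_7$, and it needs $\omega+1$ colors.
\end{itemize}
The missing idea is to reserve a whole class rather than one vertex. Let $S$ be the class of the target optimal coloring $\beta$ that contains $x$ (any class if $c$ is unused). Recolor the vertices of $S$ to $c$ one at a time; this is legal because $S$ is independent and $c$ occurs only on $x\in S$. Now $\chi(G-S)\le p-1$, so $k-1\ge p\ge\chi(G-S)+1$. The induction hypothesis then applies to $G-S$ on the palette $[k]\setminus\{c\}$, and labels are fixed afterwards using a buffer color. Since $G-S$ need not be a positive-weight blowup of $C_7$, the induction must run over blowups with possibly empty cliques, a class closed under induced subgraphs.

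\textbf{Step 2.} This is a plan rather than an argument, and the counting you suggest does not work. The closed neighborhood of a vertex of $Q_a$ can carry up to $w_{a-1}+w_a+w_{a+1}\ge k$ colors, so $k>\max_i(w_i+w_{i+1})$ gives that vertex no free color. Likewise, a bound $\sum_c|I_c|\ge 3k-O(1)$ contradicts nothing unless the constant is below $3$. What is needed is the following.
\begin{enumerate}
\item Count: $n=3k-q\le 3p$ gives $q\ge 3(k-p)\ge 3$ size-two classes, not just one.
\item Compatibility: if a position of one size-two support can be transferred into another size-two class, one move already yields a singleton. So all size-two supports may be assumed pairwise compatible.
\item Edge forcing, applied to pairs rather than closed neighborhoods: for each edge $ij$, $|Q_i\cup Q_j|\le\omega(G)<k$, so some color misses $Q_i\cup Q_j$. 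If every size-two support meets $\{i,j\}$, that color must be a size-three class supported on the unique triple avoiding $i,j$.
\end{enumerate}
Fix two size-two supports: there are five configurations up to symmetry, plus the case of two classes with equal support. This restricts all size-two supports enough to force such triples. Explicit sequences of at most three transfers then produce a singleton.

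In several of these configurations the singleton arises by whittling down a forced triple, not by emptying a size-two class. So your stated goal of emptying a vertex of the chosen size-two class is also too narrow. Without the forcing mechanism and the finite case check, Step 2 is not established.
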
 

For the proof, we work in the slightly larger class \(\mathcal C\) obtained by allowing some replacement cliques \(Q_i\) to be empty. This class is closed under taking induced subgraphs. In any proper coloring of a graph in \(\mathcal C\), a color occurs at most once in each \(Q_i\). Its support is the set of positions where it occurs. Every support is an independent set of \(C_7\), so every color class has size at most three.

We abbreviate supports by concatenating their positions: for example, \(02=\{0,2\}\) and \(146=\{1,4,6\}\). Different colors can have the same support. A move transferring position \(i\) from one support to another means recoloring the corresponding  vertex of $G$. Such a move is legal when the receiving support does not already contain \(i\), and the resulting support remains independent.

\begin{lemma}
    Let \(G\in\mathcal C\), and let \(\alpha\) be a proper \(k\)-coloring such that:
\begin{enumerate}
\item \(k>\omega(G)\);
\item every palette color is used on two or three vertices;
\item at least two color classes have size two.
\end{enumerate}
Then a coloring with a singleton color class is reachable from \(\alpha\) in at most three single-vertex recoloring moves.
\end{lemma}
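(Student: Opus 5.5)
The idea is to manufacture a singleton class by emptying one of the size-two supports, using $k>\omega(G)$ to guarantee room. First some facts about $C_7$ that the whole plan uses.

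\begin{itemize}
\item The size-three independent sets are exactly the seven sets $\{i,i+2,i+4\}$, indices mod $7$.
\item Every two-element independent set is non-maximal. A set $\{i,i+3\}$ extends only to $\{i,i+3,i+5\}$.
\item For every edge $\{j,j+1\}$ of $C_7$, removing $j,j+1$ leaves a path on five vertices. Its unique maximum independent set is $\{j+2,j+4,j+6\}$.
\item Since $|Q_j|+|Q_{j+1}|\le\omega(G)<k$, for every edge $\{j,j+1\}$ some color avoids both positions $j$ and $j+1$. By hypothesis~2 its support has size two or three. If it has size three, it is exactly $\{j+2,j+4,j+6\}$.
\end{itemize}

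\textbf{One-move case.} Let $c$ be a color with support $\{a,b\}$. If some other color $d$ avoids $a-1,a,a+1$, then moving the vertex of $Q_a$ colored $c$ to color $d$ is legal and leaves $c$ as a singleton. The same holds with $b$ in place of $a$. So we may assume every position in a size-two support is \emph{blocked}: all $k$ colors appear on $\{a-1,a,a+1\}$. This must hold for both positions of both (at least two) size-two colors.

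\textbf{Pinning down the coloring.} Next I would turn blockedness and hypothesis~2 into constraints on the multiset of supports. A blocked position $a$ means every color meets $\{a-1,a,a+1\}$, so $k\le w_{a-1}+w_a+w_{a+1}$ counted with the appropriate multiplicities. I would apply the edge fact to the edges $\{a,a+1\}$ and $\{a-1,a\}$ around a blocked position $a$. A color avoiding $\{a,a+1\}$ must then meet $a-1$. If it has size three it is $\{a+2,a+4,a+6\}=\{a+2,a+4,a-1\}$, and symmetrically on the other side. This fixes, up to rotation and reflection, the possible supports near each blocked position. Combining the constraints from the two positions of a size-two support $\{a,b\}$ (gap two or gap three), and from a second size-two support, should leave a short list of configurations.

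\textbf{Two- or three-move case.} For each remaining configuration I would give an explicit short sequence. The typical pattern has two steps.
\begin{enumerate}
\item Make a preparatory move that unblocks a position of a size-two support. This is either moving a vertex of a size-three class $\{i,i+2,i+4\}$ into a size-two class that can absorb it, or transferring a vertex between the two size-two classes (e.g.\ turning $\{a,a+3\}$ plus $\{b,\dots\}$ into a size-three class and a class that now can release a vertex).
\item Then make the one-move step above.
\end{enumerate}
Each move must keep supports independent and never fill a position twice. Hypothesis~2 is what rules out colors supported on a single position or unused, so every color is one of the listed support types.

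I expect the main obstacle to be this case analysis: showing that the blocked configurations really are few, and that each is resolved within three moves. I would organize it by the gap type of the two size-two supports and their relative rotation. I would lean throughout on the uniqueness of the maximum independent set avoiding an edge to force the remaining supports.
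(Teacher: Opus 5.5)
\textbf{There is a genuine gap: the proposal sets up the paper's framework correctly but never carries out the case analysis that constitutes the proof.}

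Your setup matches the paper's. Your ``blocked'' condition is exactly the paper's pairwise compatibility of size-two supports; a triple can never absorb a vertex, since no independent triple of $C_7$ avoids three consecutive positions. Your edge fact is the paper's key observation. But the proposal stops where the proof begins. The content of the lemma is the finite verification that, once every size-two position is blocked, certain triple supports are forced and a singleton is reachable in at most three moves. You only say the constraints ``should leave a short list'' and that you ``would give'' sequences. Nothing in the proposal excludes a blocked configuration that needs more moves, so the lemma is not proved.

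\textbf{The missing forcing step.} Your edge fact says a color avoiding an edge is \emph{either} a size-two class or one specific triple. Constraints around a single blocked position do not decide which. You must exhibit an edge met by \emph{every} size-two support.

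\textbf{Case of two different supports.} The paper picks two size-two classes with different supports and reduces by symmetry to the pairs $02/03$, $02/13$, $02/36$, $03/04$, $03/14$. It lists all supports compatible with both members of the pair. These all meet the edge $23$, $23$, $60$, $34$, $34$ respectively, which forces the triple $146$, $146$, $135$, $025$, $025$. In your language, for $02/03$: positions $2$ and $3$ are blocked, so a color avoiding $23$ contains both $1$ and $4$. The pair $14$ would have position $4$ unblocked via the color $02$, so that color must be $146$. Two moves then finish, e.g.\ $(02,03,146)\to(024,03,16)\to(024,0,136)$.

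\textbf{Case of one common support.} If all size-two classes share one support ($02$ or $03$ up to symmetry), the second class adds no constraint. You must force two triples from two edges, e.g.\ $035$ and $146$ for $02$. Two moves no longer suffice here, and the paper uses three: $(02,02,035,146)\to(025,02,03,146)\to(025,024,03,16)\to(025,024,0,136)$.

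\textbf{Your second preparatory move is unavailable.} A transfer between two size-two classes is impossible in the blocked situation, since it would already create a singleton. Every move from a blocked configuration pushes a vertex out of a triple into a size-two class. That turns the triple into a size-two class that can later absorb a vertex.
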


\begin{proof}
Call two size-two supports \emph{compatible} if no vertex can be transferred from either color class to the other. If two size-two classes have incompatible supports, such a transfer immediately produces a singleton. We may therefore assume that all size-two supports are pairwise compatible. For every cycle edge \(ij\), the union \(Q_i\cup Q_j\) is a clique, and $|Q_i|+|Q_j|\le \omega(G)<k$.
Thus some palette color is absent from this union. If every size-two support meets \(\{i,j\}\), that absent color must have a size-three support avoiding \(i,j\).

There is exactly one independent triple of \(C_7\) avoiding a specified edge: deleting the two endpoints leaves a five-vertex path, whose unique independent set of size three consists of its first, third, and fifth vertices.

\textbf{Case 1: Two size-two classes have different supports}

Up to rotation, reflection, and interchanging the selected classes, their supports form one of the five pairs in the table below.

We verify that the cases are exhaustive. An independent pair of cycle positions has cyclic distance two or three. If one selected pair has distance two, normalize it to \(02\). Its distinct compatible partners are precisely
\[
03,\quad 13,\quad 16,\quad 26,\quad 36.
\]
The reflection \(i\mapsto 2-i\pmod 7\) fixes \(02\), interchanges \(03\) with \(26\), and interchanges \(13\) with \(16\). This leaves the first three cases below.

If both selected pairs have distance three, normalize one to \(03\). Its distinct compatible distance-three partners are precisely
\[
04,\quad 14,\quad 26,\quad 36.
\]
The reflection \(i\mapsto 3-i\pmod 7\) fixes \(03\), interchanges \(04\) with \(36\), and interchanges \(14\) with \(26\). This gives the last two cases.

For each row, the second column lists all independent pairs compatible with both selected supports. Every size-two color class in the current coloring must have one of these supports. All listed supports meet the displayed edge, so the preceding observation forces the existence of a color class with the displayed triple support.

\begin{center}
\begin{tabular}{@{}llll@{}}
\toprule
Selected supports & All possible size-two supports & Edge met by all & Forced triple \\
\midrule
\(02,\ 03\) & \(02,03,13,26,36\) & \(23\) & \(146\) \\
\(02,\ 13\) & \(02,03,13\) & \(23\) & \(146\) \\
\(02,\ 36\) & \(02,03,26,36\) & \(60\) & \(135\) \\
\(03,\ 04\) & \(03,04,13,14,36,46\) & \(34\) & \(025\) \\
\(03,\ 14\) & \(03,04,13,14\) & \(34\) & \(025\) \\
\bottomrule
\end{tabular}
\end{center}

The entries in the second column follow directly from the definition of compatibility: a position of one pair can be added to another pair exactly when it is absent from that pair and nonadjacent to both of its positions.

For the five rows, respectively, perform the following recolorings:
\[
\begin{aligned}
(02,03,146)
&\longrightarrow (024,03,16)
 \longrightarrow (024,0,136),\\
(02,13,146)
&\longrightarrow (024,13,16)
 \longrightarrow (024,1,136),\\
(02,36,135)
&\longrightarrow (025,36,13)
 \longrightarrow (025,136,3),\\
(03,04,025)
&\longrightarrow (035,04,02)
 \longrightarrow (035,024,0),\\
(03,14,025)
&\longrightarrow (035,14,02)
 \longrightarrow (035,1,024).
\end{aligned}
\]

Each coordinate corresponds to a fixed color, and all unlisted colors remain unchanged. Every arrow transfers exactly one position between two coordinates. The receiving support remains independent, so each arrow is a legal single-vertex recoloring. Each final coloring has a singleton.

\textbf{Case 2: All size-two classes have the same support}

By symmetry, their common support is either \(02\) or \(03\). By the lemma's hypothesis, choose two distinct colors with this support.

If the common support is \(02\), every size-two support meets both edges \(12\) and \(23\). The observation above forces colors with supports \(035\) and \(146\), respectively. Using these four distinct colors, perform
\[
\begin{aligned}
(02,02,035,146)
&\longrightarrow (025,02,03,146)\\
&\longrightarrow (025,024,03,16)\\
&\longrightarrow (025,024,0,136).
\end{aligned}
\]

If the common support is \(03\), edges \(34\) and \(23\) force colors with supports \(025\) and \(146\), respectively. Perform
\[
\begin{aligned}
(03,03,025,146)
&\longrightarrow (035,03,02,146)\\
&\longrightarrow (035,03,024,16)\\
&\longrightarrow (035,0,024,136).
\end{aligned}
\]

Again, every arrow is a legal single-vertex recoloring, and each final coloring has a singleton. This proves the lemma. 
\end{proof} 

\begin{proof}[\bf Proof of Theorem~\ref{thm:c7}]
We prove, by induction on the number of vertices, that every graph in \(\mathcal C\) is recolorable. The empty graph is trivial. Let \(G\in\mathcal C\) be nonempty, and assume the statement holds for all smaller graphs in \(\mathcal C\). Put \(p=\chi(G)\), fix a proper \(p\)-coloring \(\beta\), and let \(k\ge p+1\). We regard \(\beta\) as a coloring from the palette \(\{1,\ldots,k\}\). We will connect every proper \(k\)-coloring \(\alpha\) to this fixed coloring \(\beta\).

\textbf{Case 1: Reduction using an unused or singleton color}

Suppose that a color \(c\) is unused or occurs on exactly one vertex. If \(c\) occurs at a vertex \(v\), let \(S\) be the color class of \(\beta\) containing \(v\). If \(c\) is unused, choose any color class \(S\) of \(\beta\). Recolor the vertices of \(S\), one at a time, to \(c\). These moves are legal because \(S\) is independent and initially no vertex outside \(S\) has color \(c\). Keep \(S\) fixed and consider \(G-S\) with the remaining \(k-1\) colors. Since \(S\) is a color class of an optimal coloring,
\[
\chi(G-S)\le p-1.
\]
Also,
\[
k-1\ge p\ge \chi(G-S)+1.
\]
The induction hypothesis then implies that the recoloring graph of \(G-S\) on this remaining palette is connected. Consequently, we can recolor \(G-S\) to the partition into color classes prescribed by \(\beta\), assigning distinct available labels to its \(p-1\) classes. Every move remains legal in \(G\), because \(S\) uses only the excluded color \(c\).

We now have exactly the color-class partition of \(\beta\), possibly with different labels. Because \(k>p\), at least one color is unused. The labels can be corrected using this unused color as a buffer: transferring an entire independent class to an unused color is legal one vertex at a time, and a buffer permits any permutation of the class labels. Hence the current coloring can be recolored to \(\beta\). Thus any coloring with an unused or singleton color is connected to \(\beta\).

\textbf{Case 2: Producing a singleton in every remaining coloring}

Now suppose \(\alpha\) has neither an unused color nor a singleton color class. Every color class then has size two or three, since every independent set in \(G\) has size at most three.
Let \(n=|V(G)|\), and let \(q\) be the number of size-two color classes. All \(k\) colors are used, so $n=2q+3(k-q)=3k-q$. The optimal coloring \(\beta\) partitions \(V(G)\) into \(p\) independent sets, each of size at most three. Thus \(n\le 3p\), and $q=3k-n\ge 3(k-p)\ge 3$.
Moreover,
\[
k>p=\chi(G)\ge \omega(G).
\]
All hypotheses of the lemma hold. Therefore \(\alpha\) can be recolored, in at most three moves, to a coloring with a singleton color class. The preceding reduction then connects it to \(\beta\).

Every proper \(k\)-coloring is therefore connected to the same fixed coloring \(\beta\), so \(R_k(G)\) is connected. This holds for every \(k\ge\chi(G)+1\), completing the induction. In particular, restricting to nonempty replacement cliques yields the theorem.
\end{proof}

\section{Proof of Theorem \ref{fractionally
co-strongly perfect}}
In this section, we prove Theorem~\ref{fractionally co-strongly perfect}. The key step is Proposition~\ref{proposition1.7} below, which reformulates the existence of a saturating vertex weighting in terms of frozen colorings of blowups. The reformulation is natural once one observes that a saturating vertex weighting is precisely a nonnegative solution to the linear system
$$\sum y(v) = 1 \qquad\text{for every maximal independent set $I$ of $H$, $v\in I$},$$
so its existence is a linear feasibility problem over the incidence vectors of the maximal independent sets. Linear programming duality then provides a certificate whenever no such solution exists, and the dual certificate turns out to yield precisely the configuration required by Theorem~\ref{thm:lifting}: a blowup of $H$ whose frozen coloring is obtained from the maximal independent sets themselves, together with a proper coloring with fewer colors.

\begin{proposition}\label{proposition1.7}
For a nonempty graph $H$, the following are equivalent:
\begin{enumerate}
\item[\textup{(i)}] $H$ has a saturating vertex weighting.
\item[\textup{(ii)}] No positive-weight blowup of $H$ has a frozen
$r$-coloring and a proper coloring with at most $r-1$ colors.
\end{enumerate}
\end{proposition}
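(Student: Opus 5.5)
The plan is to prove (i)$\Rightarrow$(ii) by a weighted double-counting argument, and (ii)$\Rightarrow$(i) in contrapositive form via Farkas' lemma. Throughout, I use Lemma~\ref{frozen} to pass between colorings of a blowup $H(a)$ and families of supports in $H$. A proper coloring of $H(a)$ with colors $1,\dots,s$ corresponds to a family of independent supports $J_1,\dots,J_s$ of $H$ with $|\{j: v\in J_j\}| = a(v)$ for every $v$; some supports may be empty if a color is unused. The coloring is frozen exactly when every support is a maximal independent set. Conversely, any such family of independent sets with the right coverage gives a proper coloring of $H(a)$, as in the proof of Corollary~\ref{coro_frozen}.

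\textbf{(i)$\Rightarrow$(ii).} Let $y$ be a saturating vertex weighting. Suppose $H(a)$ has a frozen $r$-coloring with supports $I_1,\dots,I_r$, which are maximal independent sets. Suppose it also has a proper $p$-coloring with independent supports $J_1,\dots,J_p$. I double count $\sum_v a(v)y(v)$. Via the frozen coloring,
\[
\sum_v a(v)y(v)=\sum_{i=1}^r y(I_i)=r,
\]
since $y$ sums to $1$ on each maximal independent set. Via the proper coloring, the same sum equals $\sum_{j} y(J_j)$. Each $J_j$ lies in some maximal independent set and $y\ge 0$, so $y(J_j)\le 1$. Hence $r\le p$, and no blowup has both colorings with $p\le r-1$.

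\textbf{(ii)$\Rightarrow$(i).} Assume $H$ has no saturating weighting. Let $M$ be the incidence matrix whose rows are the maximal independent sets of $H$; then $My=\mathbf 1$, $y\ge 0$ is infeasible. Farkas' lemma gives a vector $\lambda$ indexed by maximal independent sets with
\[
\sum_{I\ni v}\lambda_I\ge 0 \quad\text{for all } v, \qquad \sum_I\lambda_I<0.
\]
Because the data are rational, I can take $\lambda$ integral. Write $\lambda=\lambda^+-\lambda^-$ with $\lambda^\pm\ge0$, and put $c^\pm(v)=\sum_{I\ni v}\lambda^\pm_I$. Then $c^-(v)\le c^+(v)$ for every $v$, and $\sum\lambda^+\le\sum\lambda^- -1$.

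The idea is that $\lambda^-$ describes the frozen side and $\lambda^+$ describes the low side. Two defects must be repaired: weights must be positive, and the two coverages must coincide. For positivity, I fix any family $\mu$ of maximal independent sets covering $V(H)$, for example one maximal independent set through each vertex, with coverage $\mathrm{cov}(v)\ge1$. Then I set $a(v)=c^-(v)+\mathrm{cov}(v)\ge 1$.

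\emph{Frozen coloring.} The supports are the multiset $\lambda^-+\mu$ of maximal independent sets. Its coverage is exactly $a$, so it gives a frozen $r$-coloring of $H(a)$ with $r=\sum\lambda^-+|\mu|$.

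\emph{Low coloring.} I take the multiset $\lambda^++\mu$, whose coverage $c^+(v)+\mathrm{cov}(v)$ is at least $a(v)$. For each $v$, I delete $v$ from $c^+(v)-c^-(v)$ of the supports containing it. Subsets of independent sets remain independent, so this gives a proper coloring of $H(a)$ with at most $\sum\lambda^++|\mu|\le r-1$ colors. This contradicts (ii).

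I expect the main obstacle to be the Farkas step: getting the sign convention right and noticing that the mismatch $c^-\le c^+$ can be absorbed only on the proper side, by shrinking supports. The frozen side must consist of maximal independent sets exactly, so it cannot be adjusted in the same way.
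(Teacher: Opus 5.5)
Your proof is correct and follows essentially the paper's route: the same weighted double count for (i)$\Rightarrow$(ii), and for (ii)$\Rightarrow$(i) an LP-duality certificate turned into a frozen multiset of maximal independent sets plus a smaller dominating multiset, which is then trimmed to exact coverage. The only difference is cosmetic. The paper applies strong duality to $\min \mathbf{1}^\mathsf{T}x$ subject to $Ax\ge A\mathbf{1}$, $x\ge 0$, so its frozen side is every maximal independent set repeated $D$ times and positivity of the weights is automatic; your raw Farkas certificate instead needs the padding family $\mu$ for positivity.
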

\begin{proof}
$(i)\Rightarrow (ii).$ Suppose to the contrary that there exists a blowup $H(w)$ of $H$, such that $H(w)$ has a frozen $r$-coloring with maximal independent sets $I_1,\dots I_r$ by Lemma \ref{frozen} and a proper coloring with at most $r-1$ colors. Let $y$ be the saturating vertex weighting of $H$. Counting color occurrences at each replacement clique gives
\begin{equation}\label{6.1}
    \sum_{v\in V(H)}y(v)w(v)=\sum_{i=1}^{r}\sum_{v\in I_i}y(v)=r.
\end{equation}
Note that $H(w)$ also has a proper $p$-coloring, where $p\leq r-1$. Clearly, each of its $p$ color supports $\{J_1,\dots, J_p\}$ is independent, then the same counting gives
\begin{equation}\label{6.2}
\sum_{v\in V(H)}y(v)w(v)=\sum_{i=1}^{p}\sum_{v\in J_i}y(v)\leq p.
\end{equation}
Combining \eqref{6.1} and \eqref{6.2} gives $p\geq r$, a contradiction.

$(ii)\Rightarrow (i).$ Let $I_1,\dots, I_t$ be all the maximal independent sets of $H$. It is straightforward to check that $V(H)=\bigcup_{i=1}^t I_i$. Let $A$ be the matrix whose rows are indexed by vertices of $H$ and whose columns are indexed by $I_1,\dots, I_t$, with $A_{v, i}=1$ if $v\in I_i$ and 0 otherwise.
Let $A\mathbf{1}=a$, then $a(v)=|\{i:v\in I_i\}|$. Moreover, each entry of $a$ is positive since every vertex of $H$ must belong to a set of $\{I_1,\dots, I_t\}$.

From the above discussion, consider the primal and dual linear programs
\begin{equation}\label{6.3}
\begin{aligned}
    \min \quad  &\mathbf{1}^\mathsf{T}x\\
\text{s.t.} \quad  &Ax\geq a,\\
&x\geq0,
\end{aligned}
\end{equation}
and
\begin{equation}\label{6.4}
\begin{aligned}
    \max \quad  &a^\mathsf{T}y\\
\text{s.t.} \quad  &A^\mathsf{T}y\leq \mathbf{1},\\
&y\geq0.
\end{aligned}
\end{equation}
Note that the optimum value is finite: the vector $x=\mathbf{1}$ is primal-feasible with the objective value $t$, while the vector $y=\mathbf{0}$ is dual-feasible with value $0$. Therefore, by strong duality the two optima are equal and attained. If the optimum is $t$, an optimal dual vector satisfies
$$t=a^\mathsf{T}y=\mathbf{1}^\mathsf{T}A^\mathsf{T}y.$$
There are $t$ entries in $A^\mathsf{T}y$, each of which is at most one. Thus every entry of $A^\mathsf{T}y$ must equal one; it also implies that $y$ is a saturating vertex weighting of $H$.
Consequently, if no saturating vertex weighting exists, the optimum in \eqref{6.3} is strictly smaller than $t$. Since the data are rational, we can choose a rational optimal solution $x$ with 
$$\mathbf{1}^\mathsf{T}x<t.$$
Let $D$ be a positive integer clearing the denominators of all entries of $x$. Define every clique weight by 
\begin{equation*}
    w(v)=Da(v)=D|\{i:v\in I_i\}|\quad (v\in V(H)).
\end{equation*}
Observe that each $w(v)$ is a positive integer. Repeat each maximal independent set $I_i$ exactly $D$ times, the coverage at $v$ is $Da(v)=w(v)$. By the proof of Corollary \ref{coro_frozen} we obtain a frozen $Dt$-coloring of $H(w)$. 

We now construct a proper coloring of $H(w)$ with fewer colors. Take $Dx_i$ distinct colors with support $I_i$. 
The definition of $D$ guarantees that each $Dx_i(i\in [t])$ is an integer. The feasibility inequality gives 
$$A(Dx)\geq Da=w.$$
Therefore, for each vertex $v$ of $H$, there are at least $w(v)$ colors available. Then we can choose $w(v)$ colors for the copies of $v$ in $H(w)$. Each retained color support is an independent set of $H$, so the coloring is proper. Note that the total number of colors used is at most
$$D(x_1+\dots+x_t)=D\mathbf{1}^\mathsf{T}x\leq Dt-1.$$
As a consequence, $H(w)$ has both a frozen $Dt$-coloring and a proper coloring with at most $Dt-1$ colors, a contradiction. Thus $H$ has a saturating vertex weighting.
\end{proof}

Now we are ready to give the proof of Theorem \ref{fractionally
co-strongly perfect}.

\begin{proof}[\bf Proof of Theorem \ref{fractionally
co-strongly perfect}]
Suppose to the contrary that there exists an induced subgraph $H$ of $G$ which has no saturating vertex weighting. According to Proposition \ref{proposition1.7}, there exists a positive-weight blowup of $H$ with a frozen coloring above its chromatic number. Theorem \ref{thm:lifting} then gives a non-recolorable blowup of $G$, which leads to a contradiction.
\end{proof}

\section{Conclusion}
In this paper, we answer Question \ref{Question-BCS} negatively and disprove Conjecture \ref{Conjecture-BCS} via a frozen-coloring framework for blowups, which further yields structural restrictions on graphs whose every blowup is recolorable. The remaining open problem is to characterize the graphs $G$ for which every blowup of $G$ is recolorable. We believe that this unique structural restriction will lead to the final outcome.

\section*{Acknowledgements}
Tianyu Huang, Yulai Ma, and Jiaqi Zhang were partially supported by the National Natural Science Foundation of China (No.
12601673, 12631013).

\subsection*{Declaration of generative AI use}
During the preparation of this work, the authors used ChatGPT to improve the language and readability of the manuscript, 
including grammatical corrections and language polishing. All mathematical results and proofs were developed and verified by the 
authors, who take full responsibility for the content of the article.

\bibliographystyle{plain}
\bibliography{bib}

\end{document}